\newcommand{\dF}{\mathbb{F}}
\newcommand{\dR}{\mathbb{R}}
\newcommand{\dE}{\mathbb{E}}
\newcommand{\cB}{\mathcal{B}}
\newcommand{\cN}{\mathcal{N}}
\newcommand{\cL}{\mathcal{L}}
\newcommand{\cF}{\mathcal{F}}
\newcommand{\rI}{\mathrm{I}}
\newcommand{\bkR}{\mathbb{R}}
\newcommand{\wh}{\widehat}
\renewcommand{\leq}{\leqslant}
\renewcommand{\geq}{\geqslant}
\def\videbox{\mathbin{\vbox{\hrule\hbox{\vrule height1ex \kern.5em
\vrule height1ex}\hrule}}}
\def\demend{\hfill $\videbox$\\}
\theoremstyle{plain}
\numberwithin{equation}{section}
\newtheorem{thm}{Theorem}[section]
\newtheorem{rem}{Remark}[section]
\newtheorem{lem}{Lemma}[section]
\newcommand{\limite}[2]{\xrightarrow[#1]{#2}}
\DeclareMathOperator{\cvl}{\limite{n \to \infty}{\cL}}
\title[Stochastic Newton algorithm for logistic regressions]{An efficient stochastic Newton algorithm for parameter estimation in logistic regressions}
\author{Bernard Bercu}
\author{Antoine Godichon}
\author{Bruno Portier}
\begin{document}
\begin{abstract}
Logistic regression is a well-known statistical model which is commonly used in the situation where the output is a binary random variable. 
It has a wide range of applications including machine learning, public health, social sciences, ecology and econometry. In order to estimate 
the unknown parameters of logistic regression with data streams arriving sequentially and at high speed, we focus our attention on a recursive 
stochastic algorithm. More precisely, we investigate the asymptotic behavior of a new stochastic Newton algorithm. It enables to easily update 
the estimates when the data arrive sequentially and to have research steps in all directions. We establish the almost sure convergence of our
stochastic Newton algorithm as well as its asymptotic normality. All our theoretical results are illustrated by numerical experiments.
\end{abstract}
\maketitle

\section{Introduction}\label{secintro}
\def\bkE{\mathbb{E}}
\def\bkR{\mathbb{R}}
\def\wtheta{\widehat\theta}
\def\pa#1{\left(#1\right)}
\def\cro#1{\left[#1\right]}
\def\cB{\mathcal{B}}
\def\cL{\mathcal{L}}
\def\cR{\mathcal{R}}

%%%%%%%%%%%%
% \section{Introduction}
%%%%%%%%%%%%

Logistic regression is a well-known statistical model which is commonly used in the situation where the output is a binary random variable. 
It has a wide range of applications including machine learning \cite{bach2014adaptivity}, public health \cite{merlo2016original}, social sciences, 
ecology \cite{komori2016asymmetric} and econometry \cite{varian2014big}.
In what follows, we will consider a sequence $\left( X_{n} , Y_{n} \right)$ of random variables taking values in $\mathbb{R}^{d}\times \lbrace 0 ,1 \rbrace$, 
and we will assume that $\left( X_{n} \right)$ is a sequence of independent and identically distributed random vectors such that, that for all $n \geq 1$, 
the conditional distribution of $Y_{n}$ knowing $X_{n}$ is a Bernoulli distribution \cite{hosmer2013applied}.
More precisely, let $\theta = (\theta_0,  \ldots, \theta_d)^T$ be the unknown parameter belonging to $\bkR^{d+1}$ of the logistic regression.
For all $n \geq 1$, we denote $\Phi_n = (1, X_n^{T})^T$ and we assume that
$$
\cL\bigl(Y_{n} |  \Phi_n\bigr) =\cB \bigl( \pi ( \theta^T \Phi_n) \bigr)  \hspace{1cm} \text{where}  \hspace{1cm}  \pi(x)= \frac{\exp (x)}{1+\exp(x)}.
$$
Our goal is the estimation of the vector of parameters $\theta$. For that purpose, let $G$ be the convex positive function defined, for all $h \in \bkR^{d+1}$, by
\begin{eqnarray*}
G(h) &=& \bkE\Bigl[-\log\bigl(\pi(h^T \Phi)^Y  \bigl(1 - \pi(h^T \Phi)\bigr)^{1 - Y} \bigr) \Bigr], \\
&=& \bkE\Bigl[\log \bigl( 1+ \exp ( h^T \Phi ) \bigr) - h^T\Phi Y \Bigr]
\end{eqnarray*}
where $\cL(Y | \Phi)=\cB\bigl(\pi(\theta^T \Phi)\bigr)$ and $\Phi$ shares the same distribution as $\Phi_1$. We clearly have $\bkE\cro{Y | \Phi} = \pi(\theta^T \Phi)$. Hence, one can easily check
that the unknown parameter $\theta$ satisfies
\begin{equation}
\nabla G(\theta) = \bkE\Bigl[\Phi (\pi(\theta^T \Phi) - Y)\Bigr] = 0.
\end{equation}
Consequently, under some standard convexity assumptions on $G$,
\begin{equation}
\theta = \arg\min_{h \in \bkR^{d+1}} G(h).
\end{equation}
Since there is no explicit solution of the equation $\nabla G(h) = 0$, it is necessary to make use of an approximation algorithm in order to estimate $\theta$.
Usually, when the sample size is fixed, we approximate the solution with the help of a Newton root-finding numerical algorithm. 
However, when the data streams arrive sequentially and at high speed, it is much more appropriate and efficient to treat them 
with the help of stochastic gradient algorithms. We refer the reader to the seminal paper  \cite{robbins1951} and to its averaged version
\cite{PolyakJud92, ruppert1988efficient}, as well as to the more recent contributions on the logistic regression 
\cite{bach2014adaptivity,godichon2016,gadat2017optimal}. One can observe that in these last references, the conditional distribution 
$\cL(Y | \Phi)=\cR\bigl(\pi(\theta^T \Phi)\bigr)$ is the Rademacher distribution, instead of the usual Bernoulli $\cB\bigl(\pi(\theta^T \Phi)\bigr)$ one.
\ \vspace{2ex} \\
In this paper, we propose an alternative strategy to stochastic gradient algorithms, in the spirit of the Newton algorithm, in the sense that the step sequence 
of stochastic gradient algorithms is replaced by recursive estimates of the inverse of the Hessian matrix of the function we are minimizing. 
This strategy enables us to properly deal with the situation where the Hessian matrix has eigenvalues with significantly different absolute values. 
Indeed, in that case, it can be necessary to adapt automatically the step of the algorithm in all directions. 
To be more precise, we propose to estimate the unknown parameter $\theta$ with the help of a stochastic Newton algorithm given, for all $n \geq 1$, by
\begin{eqnarray*}
a_n & = &\pi(\theta_{n-1}^T \Phi_n) \pa{1 - \pi(\theta_{n-1}^T \Phi_n)}, \\
S_n^{-1} & = & S_{n-1}^{-1} - a_n (1 + a_n \Phi_n^T S_{n-1}^{-1}\Phi_n)^{-1}
S_{n-1}^{-1}\Phi_n\Phi_n^T S_{n-1}^{-1},\\ 
\theta_n & = & \theta_{n-1}\,+\,  S_{n}^{-1}\Phi_n \pa{Y_n - \pi(\theta_{n-1}^T \Phi_n)} 
\end{eqnarray*}
where the initial value $\theta_0$ is a bounded vector of $\dR^{d+1}$ which can be arbitrarily chosen and 
$S_0$ is a positive definite and deterministic matrix. For the sake of simplicity and in all the sequel, we take
$S_0 = I_{d+1}$ where $I_{d+1}$ stands for the identity matrix of order $(d+1)$. One can observe that
\begin{equation*}
S_{n} =   \sum_{k=1}^{n}a_{k} \Phi_{k}\Phi_{k}^{T}  + I_{d+1} 
\end{equation*}
Moreover, $S_n^{-1}$ is updated recursively, thanks to Riccati's equation (\cite{Duf97}, page 96)
which enables us to avoid the useless inversion of a matrix at each iteration of the algorithm. Furthermore,
the matrix  $n^{-1} S_n$ is an estimate of the Hessian matrix $ \nabla^{2} G \left( \theta \right)$ at the unknown value $\theta$.
In order to ensure the convergence of the stochastic Newton algorithm, a modified 
version of this algorithm is provided. We shall prove its asymptotic efficiency by establishing its almost sure convergence 
and its asymptotic normality.
\ \vspace{2ex} \\
This algorithm is closely related to the iterative one used to estimate the unknown vector $\theta$ of a linear regression model satisfying, for all $n \geq 1$,
$Y_n = \theta^T\Phi_n + \varepsilon_n$. As a matter of fact, the updating of the least squares estimator of the parameter $\theta$ is given by
\begin{eqnarray*}
S_n^{-1} &= & S_{n-1}^{-1} - (1 + \Phi_n^T  S_{n-1}^{-1} \Phi_n)^{-1}
S_{n-1}^{-1} \Phi_n\Phi_n^T S_{n-1}^{-1} \\
\theta_n &= &\theta_{n-1} + S_{n}^{-1}\Phi_n\pa{Y_n - \theta_{n-1}^T \Phi_n}  
\end{eqnarray*}
where the initial value $\theta_0$ can be arbitrarily chosen and $S_0$ is a positive definite deterministic matrix.
This algorithm can be considered as a Newton stochastic algorithm since the matrix $n^{-1} S_n$ is an estimate of the Hessian matrix of the least squares criterion
$\bkE\cro{(Y - \theta^T \Phi )^2}/2$.
\ \vspace{2ex} \\
To the best of our knowledge and apart from the least squares estimate mentioned above, stochastic Newton algorithms are hardly ever used and studied since they often require 
the inversion of a matrix at each step, which can be very expensive in term of time calculation. An alternative to the stochastic Newton algorithm is the BFGS algorithm
\cite{mokhtari2014res,lucchi2015variance,byrd2016stochastic}
based on the recursive estimation of a matrix whose behavior is closed to the one of the inverse of the Hessian matrix. 
Nevertheless, this last estimate does not converge to the exact inverse of the Hessian matrix. Consequently, the estimation of the unknown vector $\theta$ 
is not satisfactory.
\ \vspace{2ex} \\
The paper is organized as follows. Section \ref{secframe} describes the framework and our main assumptions.
In Section \ref{secalgo}, we introduce our new stochastic Newton algorithm.
Section \ref{vit} is devoted to its almost sure convergence as well as its asymptotic normality. Our theoretical results are
illustrated by numerical experiments in Section \ref{sim}. Finally, all technical proofs are postponed to Sections \ref{secproofsascvg}
and \ref{secproofsan}.

\section{Framework}\label{secframe}

In what follows, we shall consider a couple of random variables $( X,Y)$ taking values in $\bkR^{d} \times \lbrace 0,1 \rbrace$ where $d$ is a positive integer, and such that 
$$
\cL\bigl(Y |  \Phi\bigr) =\cB \bigl( \pi ( \theta^T \Phi) \bigr)  \hspace{1cm} \text{where}  \hspace{1cm}  \pi(x)= \frac{\exp (x)}{1+\exp(x)}
$$
with $\Phi = ( 1, X^{T})^{T}$ and where $\theta = \left( \theta_{0},\theta_{1},...,\theta_{d} \right)$ is the unknown parameter to estimate.  
We recall that $\theta$ is a minimizer of the convex function $G$ defined, for all $h \in \bkR^{d+1}$, by
\begin{equation}
\label{DEFG}
G(h) = \bkE\Bigl[-\log\bigl(\pi(h^T \Phi)^Y  \bigl(1 - \pi(h^T \Phi)\bigr)^{1 - Y} \bigr) \Bigr]=\bkE\bigl[ g \left( \Phi , Y, h \right) \bigr] .
\end{equation}
In all the sequel, we assume that the following assumptions are satisfied.
\begin{itemize}
\item[\textbf{(A1)}] The vector $\Phi$ has a finite 
moment of order $2$ and
the matrix $\bkE[ \Phi \Phi^{T}]$ is positive definite.
\item[\textbf{(A2)}] The Hessian matrix $\nabla^{2}G \left( \theta \right)$ is positive definite.
\end{itemize}
These assumptions ensure that $\theta$ is the unique minimizer of the functional $G$. Assumption \textbf{(A1)} enables us to find a first lower bound for the smallest eigenvalue of the estimates of 
the Hessian matrix, while assumptions \textbf{(A1)} and \textbf{(A2)} give the unicity of the minimizer of $G$ and ensure that the functional $G$ is twice continuously differentiable.
More precisely, for all $h \in \bkR^{d+1}$, we have
\begin{eqnarray}
\label{GG}
 \nabla G (h) & = & \bkE\left[ \nabla_{h} \,g \left( \Phi,Y,h \right)\right] = \bkE \left[ \frac{\exp \left( h^{T}\Phi\right)}{1+ \exp \left( h^{T}\Phi \right)}\Phi \right] - \bkE\left[ Y\Phi \right] , \\
 \nabla^{2}G \left( h \right) & = & \bkE\left[ \nabla_{h}^{2}\,g \left( \Phi , Y, h \right)\right]   =  \frac{1}{4}\,\bkE\left[ \frac{1}{\left( \cosh \left( h^{T}\Phi /2 \right) \right)^{2}} \Phi \Phi^{T} \right] .
\label{HG}
\end{eqnarray}

\begin{rem}
In the previous literature, it is more usual to consider a variable $Y$ taking values in $\lbrace -1,1\rbrace$,  which means that
$\cL(Y | \Phi)=\cR\bigl(\pi(\theta^T \Phi)\bigr)$ is the Rademacher distribution
\cite{bach2014adaptivity,godichon2016,gadat2017optimal}. In this context, $\theta$  is a minimizer of the functional $G$ defined, for all $h \in \bkR^{d+1}$, by
\begin{equation*}
G (h) = \bkE\Bigl[\ln \left( 1+ \exp \left( - Y h^{T} \Phi \right) \right)\Bigr].
\end{equation*}
Under assumptions, the functional $G$ is twice continuously differentiable and, for all $h \in \bkR^{d+1}$,
\begin{eqnarray*}
 \nabla G(h)   & = & -\bkE\left[ \frac{\exp \left( -Yh^{T}\Phi \right)}{1+ \exp \left( -Y h^{T}\Phi \right)}Y\Phi \right], \\
  \nabla^{2}G \left( h \right) & = & \frac{1}{4}\,\bkE\left[ \frac{1}{\left( \cosh \left( h^{T}\Phi /2 \right) \right)^{2}} \Phi \Phi^{T} \right] .
\end{eqnarray*}
One can observe that the Hessian matrix  remains the same. It ensures that the algorithm introduced in Section \ref{secalgo} can be adapted to this functional and 
that all the results given in Section \ref{vit} still hold in this case.
\end{rem}

\section{Stochastic Newton algorithm}\label{secalgo}
In order to deal with massive data acquired online, let us recall that the stochastic Newton algorithm presented in the introduction is
given, for all $n \geq 1$, by
\begin{eqnarray}
\label{Defaini}
a_n & = &\pi(\theta_{n-1}^T \Phi_n) \pa{1 - \pi(\theta_{n-1}^T \Phi_n)}, \\
\label{DefISini}
S_n^{-1} & = & S_{n-1}^{-1} - a_n (1 + a_n \Phi_n^T S_{n-1}^{-1}\Phi_n)^{-1}
S_{n-1}^{-1}\Phi_n\Phi_n^T S_{n-1}^{-1},\\ 
\label{Defthetaini}
\theta_n & = & \theta_{n-1}\,+\,  S_{n}^{-1}\Phi_n \pa{Y_n - \pi(\theta_{n-1}^T \Phi_n)} 
\end{eqnarray}
where the initial value $\theta_0$ is a bounded vector of $\dR^{d+1}$ which can be arbitrarily chosen and $S_0 = I_{d+1}$.
Unfortunately, we were not able to prove that $n^{-1} S_n$ converges almost surely to the Hessian matrix $ \nabla^{2} G \left( \theta \right)$,
as well as to establish the almost sure convergence of $\theta_{n}$
to $\theta$. This is the reason why we slightly modify our strategy by proposing
a truncated version of previous estimates given, for all $n \geq 1$, by
\begin{eqnarray}
\label{Defan}
\wh{a}_n & = &\pi(\wh{\theta}_{n-1}^T \Phi_n) \pa{1 - \pi(\wh{\theta}_{n-1}^T \Phi_n)} \\
\label{Defthetan}
\wh{\theta}_n & = & \wh{\theta}_{n-1}\,+\,  S_{n-1}^{-1}\Phi_n \pa{Y_n - \pi(\wh{\theta}_{n-1}^T \Phi_n)} \\
\label{DefISn}
S_n^{-1} & = & S_{n-1}^{-1} - \alpha_n (1 + \alpha_n \Phi_n^T S_{n-1}^{-1}\Phi_n)^{-1}
S_{n-1}^{-1}\Phi_n\Phi_n^T S_{n-1}^{-1}
\end{eqnarray}
where the initial value $\wh{\theta}_0$ is a bounded vector of $\dR^{d+1}$ which can be arbitrarily chosen, $S_0 = I_{d+1}$
and $(\alpha_n)$ is a sequence of random variable defined, for some positive constant $c_\alpha$, by
\begin{equation}
\label{defalphan}
\alpha_{n} = \max \Bigl\lbrace \wh{a}_{n}, \frac{c_{\alpha}}{n^{\beta}} \Bigr\rbrace = 
 \max \left\lbrace \frac{1}{4\bigl( \cosh \bigl( \Phi_{n}^{T}\wh{\theta}_{n-1}/2 \bigr)\bigr)^{2}}, \frac{c_{\alpha}}{n^{\beta}} \right\rbrace 
\end{equation}
with $\beta \in ] 0 ,1/2 [$. From now on and for the sake of simplicity, we assume that $c_{\alpha} \leq 1/4$.
It immediately implies that, for all $n \geq 1$, $\alpha_n \leq 1/4$. However, the proofs remains true for any $c_{\alpha}>0$. 
We already saw in Section \ref{secintro} that $S_{n}^{-1}$ coincides with the exact inverse of the weighted matrix $S_{n}$ given, for all $n \geq 1$, by 
\begin{equation}
\label{defSn}
\vspace{-1ex}
S_{n} =   \sum_{k=1}^{n}\alpha_{k} \Phi_{k}\Phi_{k}^{T} + I_{d+1}.
\end{equation}
Moreover, we will see in Section \ref{vit} that, even with this truncation of the estimate of 
the Hessian matrix, $n^{-1} S_n$ converges almost surely to the Hessian matrix $ \nabla^{2} G \left( \theta \right)$.
Consequently, we will still have an optimal asymptotic behavior of the estimator $\wh{\theta}_{n}$
of $\theta$.

\section{Main results}\label{vit}

Our first result deals with the almost sure convergence of our estimates of $\theta$ and the Hessian matrix $\nabla^{2}G \left( \theta \right)$. For all $n\geq 1$,
denote
$$
\overline{S}_{n} = \frac{1}{n}S_n.
$$
\begin{thm}
\label{T-ASCVG}
Assume that \textbf{(A1)} and \textbf{(A2)} are satisfied. Then, we have the almost sure convergences
\vspace{-1ex}
\begin{equation}
\label{T-ascvg1}
 \lim_{n \to \infty}\, \wh{\theta}_{n} = \theta  \hspace{1cm}\text{a.s}
\end{equation}
%and 
\begin{equation}
\label{T-ascvg2}
\lim_{n \to \infty} \, \overline{S}_{n}= \nabla^{2}G \left( \theta \right) \hspace{1cm}\text{a.s}
\end{equation}
\end{thm}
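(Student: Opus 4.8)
The plan is to prove the two convergences in sequence, starting from a spectral analysis of $S_n$, then deducing $\wh\theta_n\to\theta$ from a carefully chosen Lyapunov function, and finally obtaining $\overline{S}_n\to\nabla^2 G(\theta)$ as a byproduct. First I would establish two-sided bounds on the spectrum of $S_n = \sum_{k=1}^n\alpha_k\Phi_k\Phi_k^T + I_{d+1}$. The upper bound $\lambda_{\max}(S_n)\leq \operatorname{tr}(S_n)\leq \frac14\sum_{k=1}^n\|\Phi_k\|^2 + (d+1) = O(n)$ follows from $\alpha_k\leq 1/4$ and the strong law of large numbers for $\|\Phi_k\|^2$, finite under \textbf{(A1)}. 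The lower bound is where the truncation enters: since $\alpha_k\geq c_\alpha k^{-\beta}$, we have $S_n\succeq c_\alpha\sum_{k=1}^n k^{-\beta}\Phi_k\Phi_k^T$, and a weighted strong law for the i.i.d.\ matrices $\Phi_k\Phi_k^T$ (with weights $k^{-\beta}$, whose partial sums grow like $n^{1-\beta}/(1-\beta)$) yields $\lambda_{\min}(S_n)\geq c\, n^{1-\beta}$ eventually, a.s., using that $\bkE[\Phi\Phi^T]$ is positive definite. Consequently $\|S_n^{-1}\| = O(n^{-(1-\beta)})$, and crucially $\sum_n\|S_{n-1}^{-1}\|^2<\infty$ precisely because $\beta<1/2$.

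For the convergence $\wh\theta_n\to\theta$, the naive Euclidean Lyapunov function $\|\wh\theta_n-\theta\|^2$ is problematic: its drift involves $(\wh\theta_{n-1}-\theta)^T S_{n-1}^{-1}\nabla G(\wh\theta_{n-1})$, and the product of the two symmetric positive matrices $S_{n-1}^{-1}$ and the averaged Hessian need not be sign-definite. The key idea is to use the objective itself, $W_n := G(\wh\theta_n) - G(\theta)\geq 0$, as Lyapunov function. Since $\nabla^2 G$ is bounded (by \eqref{HG}, $\nabla^2 G(h)\preceq\frac14\bkE[\Phi\Phi^T]$), $G$ has an $M$-Lipschitz gradient, and the descent inequality gives $W_n\leq W_{n-1} + \nabla G(\wh\theta_{n-1})^T S_{n-1}^{-1}\Phi_n\varepsilon_n + \frac M2\|S_{n-1}^{-1}\Phi_n\varepsilon_n\|^2$ with $\varepsilon_n = Y_n - \pi(\wh\theta_{n-1}^T\Phi_n)$. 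Taking the conditional expectation with respect to the natural filtration $\mathcal F_{n-1} = \sigma(\Phi_1,Y_1,\dots,\Phi_{n-1},Y_{n-1})$ and using $\bkE[\Phi_n\varepsilon_n\mid\mathcal F_{n-1}] = -\nabla G(\wh\theta_{n-1})$ (which follows from $\nabla G(h)=\bkE[\Phi(\pi(h^T\Phi)-Y)]$ and the independence of $\Phi_n$ from $\mathcal F_{n-1}$), the drift becomes $-\nabla G(\wh\theta_{n-1})^T S_{n-1}^{-1}\nabla G(\wh\theta_{n-1})\leq 0$, manifestly nonpositive because $S_{n-1}^{-1}$ is symmetric positive definite. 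The remainder is bounded by $\frac M2\|S_{n-1}^{-1}\|^2\bkE\|\Phi\|^2$, summable by the first step. The Robbins--Siegmund theorem then gives that $W_n$ converges a.s.\ to a finite limit $W_\infty\geq0$ and that $\sum_n\nabla G(\wh\theta_{n-1})^T S_{n-1}^{-1}\nabla G(\wh\theta_{n-1})<\infty$, whence $\sum_n n^{-1}\|\nabla G(\wh\theta_{n-1})\|^2<\infty$ using $\lambda_{\max}(S_{n-1}) = O(n)$.

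It remains to upgrade these facts to $\wh\theta_n\to\theta$. Because $W_n$ is bounded and $G$ is coercive (a consequence of \textbf{(A1)}, since $\bkE[(u^T\Phi)^+]>u^T\bkE[\Phi Y]$ for every unit vector $u$), the sequence $\wh\theta_n$ is a.s.\ bounded. As $\sum_n n^{-1}=\infty$, the summability of $n^{-1}\|\nabla G(\wh\theta_{n-1})\|^2$ forces $\liminf_n\|\nabla G(\wh\theta_{n-1})\|=0$; extracting a convergent subsequence and using the continuity of $\nabla G$ together with the uniqueness of its zero (guaranteed by \textbf{(A1)}--\textbf{(A2)}), that subsequential limit must be $\theta$, so $W_\infty=0$. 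Finally $G(\wh\theta_n)\to G(\theta)$, and coercivity plus uniqueness of the minimizer yield $\wh\theta_n\to\theta$ a.s. I expect this passage — producing a genuinely dissipative drift in spite of the matrix gain, resolved by taking $G$ rather than the squared distance as Lyapunov function — to be the main obstacle.

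The second convergence then follows. Writing $\overline S_n = \frac1n\sum_{k=1}^n\alpha_k\Phi_k\Phi_k^T + \frac1n I_{d+1}$ and setting $a_k^\ast = \frac{1}{4}(\cosh(\theta^T\Phi_k/2))^{-2}$, I would split the average into $\frac1n\sum_k a_k^\ast\Phi_k\Phi_k^T$ and $\frac1n\sum_k(\alpha_k - a_k^\ast)\Phi_k\Phi_k^T$. The first term converges a.s.\ to $\bkE[a^\ast\Phi\Phi^T] = \nabla^2 G(\theta)$ by the strong law for i.i.d.\ matrices, the summands being dominated by $\frac14\|\Phi\|^2$. For the second, $|\alpha_k - a_k^\ast|\leq c_\alpha k^{-\beta} + L\|\wh\theta_{k-1}-\theta\|\,\|\Phi_k\|$ tends to $0$ a.s.\ since $\wh\theta_{k-1}\to\theta$ and $x\mapsto\pi(x)(1-\pi(x))$ is Lipschitz, and a Cesàro--Toeplitz argument — combined with the uniform integrability of $\|\Phi\|^2$ to control the cubic factor via a threshold on $\|\Phi_k\|$ — shows this term vanishes. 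This yields $\overline S_n\to\nabla^2 G(\theta)$ a.s.
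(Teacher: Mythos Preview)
Your proposal is correct and follows essentially the same route as the paper: spectral bounds on $S_n$ via the truncation $\alpha_k\geq c_\alpha k^{-\beta}$ and a weighted law of large numbers, then $G(\wh\theta_n)$ as Lyapunov function in a Robbins--Siegmund argument yielding $\sum_n(\lambda_{\max}(S_n))^{-1}\|\nabla G(\wh\theta_n)\|^2<\infty$, and finally the same decomposition of $\overline S_n$ with the threshold $\{\|\Phi_k\|\leq M\}$ to handle the cubic factor. Your treatment of the passage from $\liminf\|\nabla G(\wh\theta_n)\|=0$ to $\wh\theta_n\to\theta$ via coercivity of $G$ under \textbf{(A1)} is in fact more explicit than the paper's, which asserts this step directly.
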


\noindent
We now focus on the almost sure rates of convergence of our estimate of $\theta$.

\begin{thm}
\label{T-RATE}
Assume that \textbf{(A1)} and \textbf{(A2)} are satisfied. Then, we have for all $\gamma > 0$,
\begin{equation}
\label{T-rate1}
\bigl\| \wh{\theta}_{n}-\theta\bigr\|^{2} = o \left( \frac{(\log n )^{1+\gamma} }{n} \right) \hspace{1cm}\text{a.s}
\end{equation}
Moreover, suppose the random vector $\Phi$ has a finite moment of order $>2$. Then, we have
\begin{equation}
\label{T-rate2}
\bigl\| \wh{\theta}_{n}-\theta\bigr\|^{2} = O \left( \frac{\log n }{n} \right)  \hspace{1cm}\text{a.s}
\end{equation}
\end{thm}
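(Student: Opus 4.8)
The plan is to linearize the error recursion around $\theta$ and to read off the rate from the asymptotic behaviour of a score martingale, using as inputs the almost sure convergences $\wh\theta_n\to\theta$ and $\overline S_n\to H$ furnished by Theorem \ref{T-ASCVG}, where I write $H=\nabla^2 G(\theta)$ and $\Delta_n=\wh\theta_n-\theta$. Let $\cF_n=\sigma(\Phi_1,\dots,\Phi_{n+1},Y_1,\dots,Y_n)$, so that $\Phi_n$, $\wh\theta_{n-1}$ and $S_{n-1}$ are all $\cF_{n-1}$-measurable and $\veps_n=Y_n-\pi(\theta^T\Phi_n)$ is a bounded martingale increment with $\bkE[\veps_n^2\mid\cF_{n-1}]=\pi'(\theta^T\Phi_n)$. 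Applying the fundamental theorem of calculus to $\pi$, I would write $\pi(\wh\theta_{n-1}^T\Phi_n)-\pi(\theta^T\Phi_n)=\phi_n\Phi_n^T\Delta_{n-1}$ with $\phi_n=\int_0^1\pi'\bigl((\theta+t\Delta_{n-1})^T\Phi_n\bigr)\,dt\in\,]0,1/4]$, so that \eqref{Defthetan} becomes
\[ \Delta_n=\pa{I_{d+1}-\phi_n S_{n-1}^{-1}\Phi_n\Phi_n^T}\Delta_{n-1}+S_{n-1}^{-1}\Phi_n\veps_n . \]

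First, I would establish the energy recursion for $V_n=\nrm{\Delta_n}^2$. Expanding the square yields
\[ V_n=V_{n-1}-2\phi_n\Delta_{n-1}^TS_{n-1}^{-1}\Phi_n\Phi_n^T\Delta_{n-1}+2\veps_n\Delta_{n-1}^TS_{n-1}^{-1}\Phi_n+\pa{Y_n-\pi(\wh\theta_{n-1}^T\Phi_n)}^2\Phi_n^TS_{n-1}^{-2}\Phi_n , \]
where the third term is a martingale increment. Since $\overline S_n\to H$ gives $S_{n-1}^{-1}\sim n^{-1}H^{-1}$ and, by \eqref{HG} together with $\Delta_{n-1}\to0$, $\bkE[\phi_n\Phi_n\Phi_n^T\mid\cF_{n-1}]\to H$, the descent term is asymptotically $-2n^{-1}V_{n-1}$, while the last term is $O(n^{-2})$ because $S_{n-1}^{-2}=O(n^{-2})$ and $Y_n-\pi(\wh\theta_{n-1}^T\Phi_n)$ is bounded. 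Hence $V_n$ obeys a Robbins--Monro recursion with step $2/n$ and summable, conditionally centred noise.

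Then, I would extract the rates. Setting $N_n=\sum_{k=1}^n\Phi_k\veps_k$, solving the linearized recursion shows that $\Delta_n$ is, up to a lower order remainder, equal to $n^{-1}H^{-1}N_n$; since $\langle N\rangle_n=\sum_{k=1}^n\bkE[\veps_k^2\Phi_k\Phi_k^T\mid\cF_{k-1}]\sim nH$ by \eqref{HG}, the strong law of large numbers for vector martingales (\cite{Duf97}) gives $\nrm{N_n}^2=o\pa{n(\log n)^{1+\gamma}}$ a.s. under the mere second moment of \textbf{(A1)}, whence \eqref{T-rate1}. To reach the sharp factor in \eqref{T-rate2}, I would invoke the quadratic strong law for martingales, which upgrades this to $\nrm{N_n}^2=O(n\log n)$ a.s. but requires the strengthened moment of order $>2$ on $\Phi$. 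In both cases one transfers the bound through $V_n\sim n^{-2}\nrm{H^{-1}N_n}^2$ by feeding it back into the energy recursion and controlling the drift with the $-2/n$ term.

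The hard part will be the quantitative control of two discrepancies that the qualitative statements of Theorem \ref{T-ASCVG} do not immediately provide. First, the weights $\alpha_k$ entering $S_n$ through \eqref{defSn} differ from the linearization coefficients $\phi_k$, and the truncation $\alpha_k=\max\{\wh a_k,c_\alpha k^{-\beta}\}$ in \eqref{defalphan} must be shown to perturb the drift matrix $S_{n-1}^{-1}\bkE[\phi_n\Phi_n\Phi_n^T\mid\cF_{n-1}]$ only by a term that is negligible once $\beta<1/2$; this is where the restriction on $\beta$ is used. Second, the Taylor remainder hidden in $\phi_n-\pi'(\theta^T\Phi_n)$ is of size $\nrm{\Phi_n}\,\nrm{\Delta_{n-1}}$ by the Lipschitz continuity of $\pi'$, so that closing the recursion demands a genuinely quantitative use of $\Delta_n\to0$ rather than its mere convergence; handling this self-referential term, together with securing the optimal $\log n$ factor of \eqref{T-rate2} via the quadratic strong law, constitutes the technical core of the argument.
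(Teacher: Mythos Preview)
Your outline has the right macroscopic picture---linearize, isolate a score martingale whose bracket grows like $nH$, and read off the rate from the strong law for martingales---but the execution differs from the paper's in a way that leaves the ``self-referential'' step you flag at the end genuinely unresolved.

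First, a concrete inconsistency. With your filtration $\cF_{n-1}=\sigma(\Phi_1,\dots,\Phi_n,Y_1,\dots,Y_{n-1})$, the vector $\Phi_n$ is $\cF_{n-1}$-measurable, so $\bkE[\phi_n\Phi_n\Phi_n^T\mid\cF_{n-1}]=\phi_n\Phi_n\Phi_n^T$ and does \emph{not} converge to $H$; the drift term in your energy recursion is then a random rank-one quantity, and the claimed reduction to $-2n^{-1}V_{n-1}$ fails. You need the paper's filtration $\cF_{n}=\sigma((\Phi_1,Y_1),\dots,(\Phi_n,Y_n))$, under which one averages over $\Phi_{n+1}$ and obtains $\bkE[\phi_{n+1}\Phi_{n+1}\Phi_{n+1}^T\mid\cF_n]=\nabla^2G(\xi_n)$ for some $\xi_n$ between $\wh\theta_n$ and $\theta$.

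Second, and more importantly, the paper does not go through an energy recursion on $V_n=\nrm{\Delta_n}^2$. Instead it exploits a purely algebraic telescoping. Writing $S=\nabla^2G(\theta)$, $Z_{n+1}=\nabla_h g(\Phi_{n+1},Y_{n+1},\wh\theta_n)$, $\veps_{n+1}=Z_{n+1}-\nabla G(\wh\theta_n)$ and the Taylor remainder $\delta_n=\nabla G(\wh\theta_n)-S(\wh\theta_n-\theta)$, one gets from \eqref{Defthetan}
\[
\wh\theta_{n+1}-\theta=\Bigl(1-\frac1n\Bigr)(\wh\theta_n-\theta)-\frac1n\bigl(\overline S_n^{-1}-S^{-1}\bigr)Z_{n+1}-\frac1nS^{-1}(\delta_n+\veps_{n+1}),
\]
and since $\prod_{j=k+1}^{n}(1-1/j)=k/n$, this telescopes \emph{exactly} to
\[
\wh\theta_{n+1}-\theta=-\frac1n\,M_{n+1}-\Delta_n,\qquad M_{n+1}=\sum_{k=1}^n\overline S_k^{-1}\veps_{k+1},
\]
with a Ces\`aro remainder $\Delta_n=\frac1n\sum_{k=1}^n\bigl[(\overline S_k^{-1}-S^{-1})S(\wh\theta_k-\theta)+\overline S_k^{-1}\delta_k\bigr]$. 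The point is that only the \emph{qualitative} convergences $\overline S_k^{-1}\to S^{-1}$ and $\delta_k=o(\nrm{\wh\theta_k-\theta})$ from Theorem~\ref{T-ASCVG} are needed to obtain $\nrm{\Delta_n}\leq cL_n+D/n$ with some $c<1/2$, where $L_n=\frac1n\sum_{k=1}^n\nrm{\wh\theta_k-\theta}$. Feeding $\nrm{\wh\theta_{n+1}-\theta}\leq n^{-1}\nrm{M_{n+1}}+\nrm{\Delta_n}$ back into the definition of $L_{n+1}$ yields a closed scalar recursion
\[
L_{n+1}\leq\Bigl(1-\frac{1-c}{n+1}\Bigr)L_n+\frac{1}{n(n+1)}\bigl(\nrm{M_{n+1}}+D\bigr),
\]
which can be solved explicitly since $1-c>1/2$. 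This is precisely the device that closes the bootstrap you identified: it trades the stochastic contraction $I-\phi_nS_{n-1}^{-1}\Phi_n\Phi_n^T$ in your recursion for the deterministic factor $1-1/n$, at the cost of a Ces\`aro remainder that is controlled by $L_n$ itself. Your sketch never supplies such a mechanism, and without it the step ``solving the linearized recursion shows that $\Delta_n\sim n^{-1}H^{-1}N_n$'' is circular.

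Finally, note that the paper's martingale is $M_n=\sum_k\overline S_{k-1}^{-1}\veps_k$, not $N_n=\sum_k\Phi_k\veps_k$; the $\overline S_k^{-1}$ weighting is what makes $\langle M\rangle_n/n\to S^{-1}$ directly, and the moment assumption of order $>2$ on $\Phi$ enters exactly to upgrade $\nrm{M_n}^2=o(n(\log n)^{1+\gamma})$ to $O(n\log n)$ via the law of the iterated logarithm type bound for martingales.
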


\noindent
The almost sure rates of convergence of our estimate of the Hessian matrix $\nabla^{2}G \left( \theta \right)$ and its inverse are as follows.

\begin{thm}
\label{T-RATEH}
Assume that \textbf{(A1)} and \textbf{(A2)} are satisfied and that the random vector $\Phi$ has a finite moment of order $4$. Then, we have 
for all $0<\beta<1/2$,
\begin{equation}
\label{T-rate3}
 \bigl\| \overline{S}_{n} - \nabla^{2}G \left( \theta \right) \bigr\|^{2} = O \left( \frac{1}{n^{2\beta}} \right) \hspace{1cm}\text{a.s}
\end{equation}
In addition, we also have 
\begin{equation}
\label{T-rate4}
 \Bigl\| \overline{S}_{n}^{-1} - \left( \nabla^{2}G \left( \theta \right) \right)^{-1} \Bigr\|^{2} = O \left( \frac{1}{n^{2\beta}} \right) \hspace{1cm}\text{a.s}
\end{equation}
\end{thm}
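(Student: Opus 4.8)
The plan is to compare $\overline{S}_n$ with the average of the ``oracle'' weights built from the true parameter. For each $k$ set $a_k^\ast = \pi(\theta^T\Phi_k)\pa{1-\pi(\theta^T\Phi_k)}$, so that the matrices $a_k^\ast\Phi_k\Phi_k^T$ are i.i.d.\ with mean $\nabla^2 G(\theta)$ by \eqref{HG}. Since $\overline{S}_n = \frac1n\sum_{k=1}^n\alpha_k\Phi_k\Phi_k^T + \frac1n I_{d+1}$ by \eqref{defSn}, I would write
\[
\overline{S}_n - \nabla^2 G(\theta) = \frac1n\sum_{k=1}^n\pa{\alpha_k - a_k^\ast}\Phi_k\Phi_k^T + \frac1n\sum_{k=1}^n\pa{a_k^\ast\Phi_k\Phi_k^T - \nabla^2 G(\theta)} + \frac1n I_{d+1}
\]
and bound the three pieces. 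The last is $O(1/n)$. The middle one is a normalized sum of centered i.i.d.\ matrices whose entries have finite variance, because $\|a_k^\ast\Phi_k\Phi_k^T\|\leq\frac14\|\Phi_k\|^2$ and $\bkE[\|\Phi\|^4]<\infty$; the law of the iterated logarithm then makes it $O(\sqrt{\log\log n / n}) = o(n^{-\beta})$ for every $\beta<1/2$.

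The remaining term is the heart of the matter, and I would split $\alpha_k - a_k^\ast = (\alpha_k - \wh a_k) + (\wh a_k - a_k^\ast)$ following \eqref{Defan} and \eqref{defalphan}. For the truncation part, the definition of $\alpha_k$ gives $0\leq\alpha_k - \wh a_k\leq c_\alpha k^{-\beta}$, hence $\frac1n\sum_{k=1}^n(\alpha_k-\wh a_k)\|\Phi_k\|^2 \leq \frac{c_\alpha}{n}\sum_{k=1}^n k^{-\beta}\|\Phi_k\|^2$. Replacing $\|\Phi_k\|^2$ by its mean plus a centered martingale increment, the deterministic part is $\frac{c_\alpha\bkE[\|\Phi\|^2]}{n}\sum_{k=1}^n k^{-\beta}\sim \frac{c_\alpha\bkE[\|\Phi\|^2]}{1-\beta}\,n^{-\beta}$, which is exactly the term producing the announced rate, while the martingale part has predictable quadratic variation $O(n^{1-2\beta})$ (again using $\bkE[\|\Phi\|^4]<\infty$) and is therefore $o(n^{-\beta})$.

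For the estimation part I would use that $x\mapsto\pi(x)(1-\pi(x))=1/(4\cosh^2(x/2))$ is Lipschitz with constant $1/4$, so that $|\wh a_k - a_k^\ast|\leq\frac14\|\Phi_k\|\,\|\wh\theta_{k-1}-\theta\|$. A direct bound would produce a factor $\|\Phi_k\|^3$ and require a sixth moment; to stay within the fourth-moment hypothesis I would instead apply Cauchy--Schwarz,
\[
\frac1n\sum_{k=1}^n|\wh a_k - a_k^\ast|\,\|\Phi_k\|^2 \leq \pa{\frac1n\sum_{k=1}^n|\wh a_k - a_k^\ast|^2}^{1/2}\pa{\frac1n\sum_{k=1}^n\|\Phi_k\|^4}^{1/2}.
\]
The second factor converges to $\pa{\bkE[\|\Phi\|^4]}^{1/2}$ by the strong law. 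In the first, $|\wh a_k - a_k^\ast|^2\leq\frac1{16}\|\Phi_k\|^2\|\wh\theta_{k-1}-\theta\|^2$; since $\wh\theta_{k-1}$ is independent of $\Phi_k$, subtracting the mean of $\|\Phi_k\|^2$ leaves a martingale with summable conditional quadratic variation (here $\bkE[\|\Phi\|^4]<\infty$ enters once more, via $\sum_k\|\wh\theta_{k-1}-\theta\|^4<\infty$), so $\frac1n\sum_{k=1}^n|\wh a_k-a_k^\ast|^2$ is governed by $\frac{\bkE[\|\Phi\|^2]}{n}\sum_{k=1}^n\|\wh\theta_{k-1}-\theta\|^2$. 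Feeding in the rate $\|\wh\theta_{k-1}-\theta\|^2=O(\log k / k)$ from \eqref{T-rate2} yields $O((\log n)^2/n)$, so the estimation part is $O(\log n/\sqrt n)=o(n^{-\beta})$. Collecting the three bounds gives $\|\overline S_n-\nabla^2 G(\theta)\|=O(n^{-\beta})$ a.s., which is \eqref{T-rate3}. I expect this estimation part to be the genuine obstacle: the naive Lipschitz estimate costs one moment order too many, and the Cauchy--Schwarz rearrangement is precisely what keeps the whole argument inside the fourth-moment assumption of the theorem.

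Finally, \eqref{T-rate4} follows from the resolvent identity $\overline S_n^{-1}-(\nabla^2 G(\theta))^{-1}=-\,\overline S_n^{-1}\pa{\overline S_n - \nabla^2 G(\theta)}(\nabla^2 G(\theta))^{-1}$, which gives $\|\overline S_n^{-1}-(\nabla^2 G(\theta))^{-1}\|\leq\|\overline S_n^{-1}\|\,\|(\nabla^2 G(\theta))^{-1}\|\,\|\overline S_n-\nabla^2 G(\theta)\|$. The factor $\|(\nabla^2 G(\theta))^{-1}\|$ is a finite constant by \textbf{(A2)}, and $\|\overline S_n^{-1}\|$ is almost surely bounded: by \eqref{T-ascvg2} the smallest eigenvalue of $\overline S_n$ converges to $\lambda_{\min}(\nabla^2 G(\theta))>0$, hence is eventually bounded away from $0$. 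The right-hand side is thus $O(n^{-\beta})$, and squaring yields \eqref{T-rate4}.
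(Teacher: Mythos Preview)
Your argument is correct, and the derivation of \eqref{T-rate4} from \eqref{T-rate3} via the resolvent identity is identical to the paper's. The route to \eqref{T-rate3}, however, differs in one structural choice. You center around the \emph{oracle} weights $a_k^\ast=\alpha(\Phi_k,\theta)$, so that the fluctuation term is an i.i.d.\ sum handled by the LIL, and the estimation term $(\wh a_k-a_k^\ast)\Phi_k\Phi_k^T$ lives at the sample level; this is why you have to invoke Cauchy--Schwarz to avoid a $\|\Phi_k\|^3$ factor and a sixth-moment requirement. The paper instead centers around the \emph{compensator}: it writes $\overline S_n=\frac1nT_n+\frac1n\sum_k\dE[\alpha_k\Phi_k\Phi_k^T\mid\cF_{k-1}]+\frac1nI_{d+1}$ with $T_n$ a square-integrable martingale (controlled by the martingale SLLN under the fourth-moment assumption), and observes that $\dE[\alpha(\Phi_k,\wh\theta_{k-1})\Phi_k\Phi_k^T\mid\cF_{k-1}]=\nabla^2G(\wh\theta_{k-1})$. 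The Lipschitz bound of Lemma~\ref{Lemalpha} then gives $\|\nabla^2G(\wh\theta_{k-1})-\nabla^2G(\theta)\|\leq\frac{\dE[\|\Phi\|^3]}{12\sqrt3}\|\wh\theta_{k-1}-\theta\|$, so only a \emph{third} moment is consumed at this step, the cube having been absorbed into a deterministic constant by the conditioning. In short, the conditioning in the paper's decomposition sidesteps the very moment obstacle that your Cauchy--Schwarz manoeuvre is designed to circumvent; your approach is slightly longer but perfectly valid, and both yield the same $O(n^{-\beta})$ driven by the truncation term.
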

\vspace{-2ex}
\begin{proof}
The proofs of Theorems \ref{T-ASCVG}, \ref{T-RATE} and \ref{T-RATEH} are given in Section \ref{secproofsascvg}.
\end{proof}

\begin{rem}
One can observe that we do not obtain the parametric rate $1/n$ for these estimates. This is due to the truncation $\alpha_n$ which
slightly modifies our estimation procedure. However, without this truncation, we were not able to establish the almost
sure convergence of any estimate. Finally, the last result \eqref{T-rate4} ensures that our estimation procedure performs
pretty well and that the estimator $\wh{\theta}_n$ has an optimal asymptotic behavior.
\end{rem}

\begin{thm}
\label{T-AN}
Assume that \textbf{(A1)} and \textbf{(A2)} are satisfied and that the random vector $\Phi$ has a finite moment of order $4$. 
Then, we have the asymptotic normality
\begin{equation}
\label{T-an1}
 \sqrt{n} \left( \wh{\theta}_{n} - \theta \right) \cvl \mathcal{N}\Bigl( 0 , \left(\nabla^{2} G \left( \theta \right) \right)^{-1} \Bigr) .
\end{equation}
\end{thm}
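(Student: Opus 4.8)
The plan is to linearize the error recursion and reduce the statement to a central limit theorem for a single normalized martingale. First I would set $\cF_n = \sigma(\Phi_1, Y_1, \ldots, \Phi_n, Y_n)$ and $\Delta_n = \wh\theta_n - \theta$. Because $\Phi_n$ is independent of $\cF_{n-1}$ and $\bkE[Y_n \mid \Phi_n, \cF_{n-1}] = \pi(\theta^T\Phi_n)$, the innovation in \eqref{Defthetan} decomposes as
\[
\Phi_n\bigl(Y_n - \pi(\wh\theta_{n-1}^T\Phi_n)\bigr) = -\nabla G(\wh\theta_{n-1}) + \xi_n,
\]
where $\xi_n = \Phi_n(Y_n - \pi(\wh\theta_{n-1}^T\Phi_n)) + \nabla G(\wh\theta_{n-1})$ is a martingale increment, $\bkE[\xi_n \mid \cF_{n-1}] = 0$. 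Writing $\nabla G(\wh\theta_{n-1}) = H_{n-1}\Delta_{n-1}$ with $H_{n-1} = \int_0^1 \nabla^2 G(\theta + t\Delta_{n-1})\,dt$ (using $\nabla G(\theta)=0$), the recursion becomes
\[
\Delta_n = \bigl(I_{d+1} - S_{n-1}^{-1}H_{n-1}\bigr)\Delta_{n-1} + S_{n-1}^{-1}\xi_n.
\]

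The heart of the argument is to show that this recursion is, up to negligible terms, the one with scalar gain $1/n$. By Theorem \ref{T-ASCVG} we have $\wh\theta_{n-1}\to\theta$ and $\overline S_{n-1}\to\nabla^2 G(\theta)$ a.s., so $H_{n-1}\to\nabla^2 G(\theta)$ and $nS_{n-1}^{-1}\to(\nabla^2 G(\theta))^{-1}$ a.s.; quantitatively, \eqref{T-rate2} and the Lipschitz continuity of $\nabla^2 G$ (guaranteed by the moment of order $4$) give $\|H_{n-1}-\nabla^2 G(\theta)\| = O(\sqrt{\log n/n})$, while \eqref{T-rate4} gives $\|\overline S_{n-1}^{-1}-(\nabla^2 G(\theta))^{-1}\| = O(n^{-\beta})$. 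Since $\beta<1/2$, combining these yields
\[
S_{n-1}^{-1}H_{n-1} = \frac{1}{n-1}\bigl(I_{d+1} + \Psi_{n-1}\bigr), \qquad \|\Psi_{n-1}\| = O(n^{-\beta}) \text{ a.s.}
\]
Iterating the recursion and using $\prod_{j=k+1}^n(1-1/j) = k/n$, I would then establish the reduction
\[
\sqrt n\,\Delta_n = \bigl(\nabla^2 G(\theta)\bigr)^{-1}\frac{1}{\sqrt n}\sum_{k=1}^n \xi_k + o_{\dP}(1).
\]
The hard part is precisely controlling this remainder: it gathers the deterministic decay of the initial condition, the accumulated matrix perturbations $\Psi_k$, and the contribution of the off-diagonal products, and each must be shown to vanish after multiplication by $\sqrt n$, which relies on the rates above together with the summability $\sum_j j^{-1-\beta}<\infty$.

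Finally I would apply the martingale central limit theorem to $M_n = \sum_{k=1}^n \xi_k$. Its predictable quadratic variation is $\langle M\rangle_n = \sum_{k=1}^n \bkE[\xi_k\xi_k^T\mid\cF_{k-1}] = \sum_{k=1}^n V(\wh\theta_{k-1})$ for a continuous matrix-valued map $V$; a direct computation using $\pi(x)(1-\pi(x)) = (4\cosh^2(x/2))^{-1}$ and $\nabla G(\theta)=0$ shows $V(\theta)=\nabla^2 G(\theta)$, in view of \eqref{HG}. Since $\wh\theta_{k-1}\to\theta$ a.s., the Toeplitz lemma gives $n^{-1}\langle M\rangle_n\to\nabla^2 G(\theta)$ a.s., and the moment of order $4$ yields the Lyapunov condition $n^{-2}\sum_{k=1}^n\bkE[\|\xi_k\|^4\mid\cF_{k-1}] = O(n^{-1})\to 0$, since $\wh\theta_{k-1}$ is bounded and $\|\xi_k\|\le 2\|\Phi_k\| + \|\nabla G(\wh\theta_{k-1})\|$. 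Hence $n^{-1/2}M_n \cvl \mathcal{N}(0,\nabla^2 G(\theta))$, and the reduction together with Slutsky's lemma gives
\[
\sqrt n\bigl(\wh\theta_n-\theta\bigr) \cvl \bigl(\nabla^2 G(\theta)\bigr)^{-1}\mathcal{N}\bigl(0,\nabla^2 G(\theta)\bigr) = \mathcal{N}\Bigl(0,\bigl(\nabla^2 G(\theta)\bigr)^{-1}\Bigr),
\]
where the last equality uses the symmetry of $\nabla^2 G(\theta)$.
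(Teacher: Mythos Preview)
Your proposal is correct in outline and reaches the same conclusion, but the route differs from the paper's in a way worth noting.

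The paper does not iterate the matrix recursion $\Delta_n=(I-S_{n-1}^{-1}H_{n-1})\Delta_{n-1}+S_{n-1}^{-1}\xi_n$ directly. Instead, in the proof of Theorem~\ref{T-RATE} it rewrites the update by adding and subtracting $S^{-1}\nabla G(\wh\theta_n)$ with $S=\nabla^2 G(\theta)$, using $S^{-1}\nabla G(\wh\theta_n)=(\wh\theta_n-\theta)+S^{-1}\delta_n$. This turns the coefficient of $\wh\theta_n-\theta$ into the \emph{scalar} $1-1/n$, which telescopes exactly and yields the closed form \eqref{DecR3}: $\wh\theta_{n+1}-\theta=-\tfrac{1}{n}M_{n+1}-\Delta_n$ with $M_{n+1}=\sum_{k=1}^n\overline S_k^{-1}\varepsilon_{k+1}$. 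The asymptotic normality proof then simply multiplies by $\sqrt n$, shows $R_n=\sqrt n\,\Delta_n\to 0$ a.s.\ by splitting $\Delta_n$ into two sums and bounding each with the rates \eqref{T-rate2} and \eqref{T-rate4}, and applies the martingale CLT to $M_n$ (whose predictable quadratic variation was already computed in \eqref{LimCVM}).

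Your approach keeps the matrix coefficient and must control the product $\prod_j\bigl(I-\tfrac{1}{j}(I+\Psi_j)\bigr)$; the scalar identity $\prod_{j=k+1}^n(1-1/j)=k/n$ does not apply directly, and reducing to it is precisely the ``hard part'' you flag. It can be done (the summability $\sum j^{-1-\beta}<\infty$ does give $\|\Pi_{n,k}\|\le C k/n$), but it is noticeably more work than the paper's exact telescoping, and one must also separately handle the early indices where $S_{k-1}^{-1}H_{k-1}$ is far from $\tfrac{1}{k-1}I$. Your martingale is the ``pure'' one $\sum_k\xi_k$ with limiting variance $\nabla^2 G(\theta)$, followed by multiplication by $S^{-1}$; the paper's martingale already absorbs $\overline S_k^{-1}$ and has limiting variance $(\nabla^2 G(\theta))^{-1}$ directly. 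Both are fine; the paper's choice avoids an extra approximation step. One small point: you do not need ``$\wh\theta_{k-1}$ bounded'' for Lyapunov, since $\|\xi_k\|\le\|\Phi_k\|+\bkE[\|\Phi\|]$ holds unconditionally, which is exactly the bound the paper uses.
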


\begin{proof}
The proof of Theorem \ref{T-AN} is given in Section \ref{secproofsan}.
\end{proof}

\begin{rem}
We deduce from \eqref{T-ascvg2} and \eqref{T-an1} that
\begin{equation}
\label{T-an2}
\pa{\wh{\theta}_{n}  - \theta}^T S_n \pa{\wh{\theta}_{n}  - \theta}
\cvl  \chi^2(d+1).
\end{equation}
Convergence \eqref{T-an2} allows us to build confidence regions for the parameter $\theta$.
Moreover, for any vector $w \in\mathbb{R}^{d+1}$ different from zero, we also have
\begin{equation}
\label{T-an3}
\dfrac{w^T \pa{\wh{\theta}_n - \theta}}{\sqrt{w^T S_n^{-1} w}}
\cvl \cN(0,1).
\end{equation}
Confidence intervals and significance tests for the components of $\theta$  can be designed from \eqref{T-an3}. One can observe that
our stochastic Newton algorithm has the same asymptotic behavior as the averaged version of a stochastic gradient algorithm 
\cite{gadat2017optimal, GB2017, Pel00}.
\end{rem}

\section{Numerical experiments}\label{sim}

The goal of this section is to illustrate the asymptotic behavior of the truncated stochastic Newton algorithm (TSN) 
defined by equation \eqref{Defthetan}. 
For that purpose, we will focus on the model introduced in \cite{clemencon2015survey}
and used for comparing several gradient algorithms. We shall compare the numerical performances
of the TSN algorithm with those obtained with three different algorithms :  the Stochastic Newton (SN) algorithm
given by equation \eqref{Defthetaini}, the stochastic gradient algorithm (SG), and the averaged stochastic gradient algorithm (ASG).  
Let us mention that simulations were carried out using the statistical software R.

\subsection{Experiment model}

We focus on the model introduced in \cite{clemencon2015survey}, defined  by
$$
\cL(Y|\Phi) =\cB \bigl(\pi ( \theta^T \Phi )\bigr) 
$$
where $\Phi= (1, X^{T})^T$ and $X$ is a random vector of $\dR^d$ with $d=10$
with independent coordinates uniformly distributed on the interval $[0 ,1]$. Moreover the unknown parameter
$\theta = (-9,0,3,-9,4,-9,15,0,-7,1,0)^T$. This model is particularly interresting since it leads
to a Hessian matrix $\nabla^2 G(\theta)$ with eigenvalues of different order sizes.
Indeed, one can see in Table \ref{Tab_EigenH} that the smallest eigenvalue of $\nabla^2 G(\theta)$ is close to 4.422 $10^{-4}$ while
its largest eigenvalue is close to 0.1239.

\vspace{-2ex}
\begin{table}[H]\centering
\begin{tabular}{|c|c|c|c|c|c|}
\hline
0.1239 & 2.832 $10^{-3}$ & 2.822 $10^{-3}$ & 2.816 $10^{-3}$ & 
 2.778 $10^{-3}$ & 2.806 $10^{-3}$  \\
% & & & & & \\
\hline
% & & & & & \\
 2.651 $10^{-3}$ & 2.517 $10^{-3}$ & 2.1567 $10^{-3}$ & 9.012 $10^{-4}$ & 4.422 $10^{-4}$ &
\\
 \hline
\end{tabular}
\caption{Estimated eigenvalues of $\nabla^2G(\theta )$
arranged in decreasing order.}
\label{Tab_EigenH}
\end{table}
\vspace{-2ex}

\subsection{Comparison of the different algorithms}

Our comparisons are based on the mean squared error (MSE) defined, for all estimate $\widehat{\theta}_{n}$ of $\theta$, by
$$
\dE\Bigl[ \bigl\| \widehat{\theta}_{n} - \theta \bigr\|^{2} \Bigr].
$$
We simulate $N=400$ samples wth a maximum number of iterations $n=5\,000$. 
For each sample, we estimate the unknown parameter $\theta$ using the four algorithms (TSN, SN, SG, ASG)
which are initialized identically by choosing the initial value $\widehat{\theta}_0$ uniformly in a compact subset containing the true value $\theta$. 
For the TSN and SN algorithms, we take $S_0=I_{d+1}$. In addition, 
for the TSN algorithm,  we choose the truncation term defined by
$c_{\alpha}=10^{-10}$  and $\beta =0.49$.
Finally, to be fairplay, we choose the best step sequence for the SG algorithm with the help of a cross validation method. 
Figure \ref{FigMSE} shows the decreasing behavior of the MSE, calculated for the four algorithms, as the number of iterations $n$ grows from $1$ to $5\,000$.

%\vspace{-2ex}
\begin{figure}[H]
\centering
\includegraphics[scale=0.6]{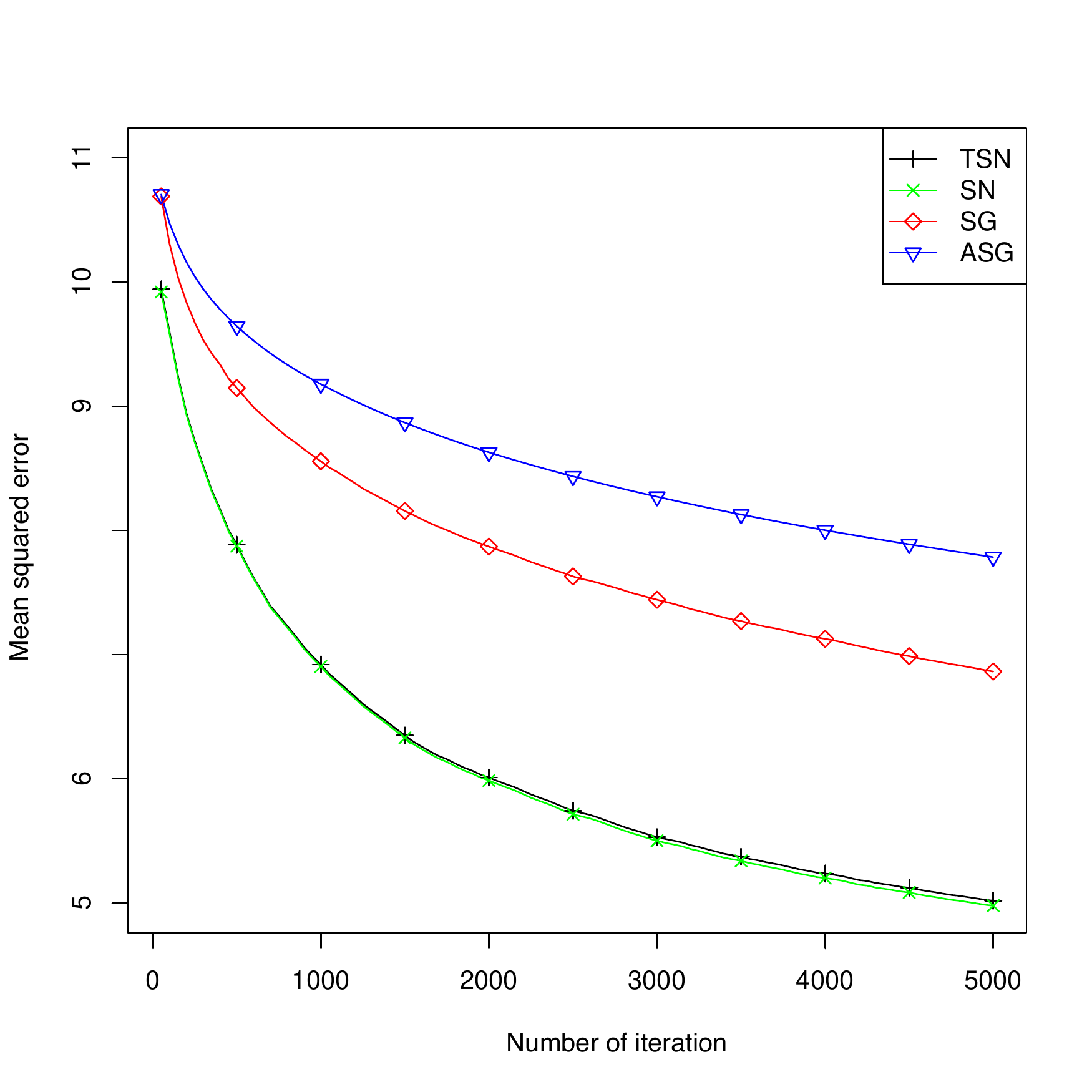}
\caption{Mean squared error of the four algorithms. }
\label{FigMSE}
\end{figure}

\noindent
It is clear that the stochastic Newton algorithms perform much more better than the stochastic gradient algorithms
The bad behavior of the stochastic gradient algorithms is certainly due to the fact that the eigenvalues of the Hessian
matrix $\nabla^2 G(\theta)$ are at different scales. One can also observe that it is quite useless to average the SG algorithm.
\ \vspace{1ex}\\
On can find in Figure \ref{FigBOX} the boxplots of the $N=400$ values of the squared error $\| \widehat{\theta}_{n} - \theta \|^{2}$ 
computed for the TSN and SN algorithms, as well as for the deterministic Newton-Raphson algorithm (NR).

\vspace{-5ex}
\begin{figure}[H]
\centering
\includegraphics[scale=0.6]{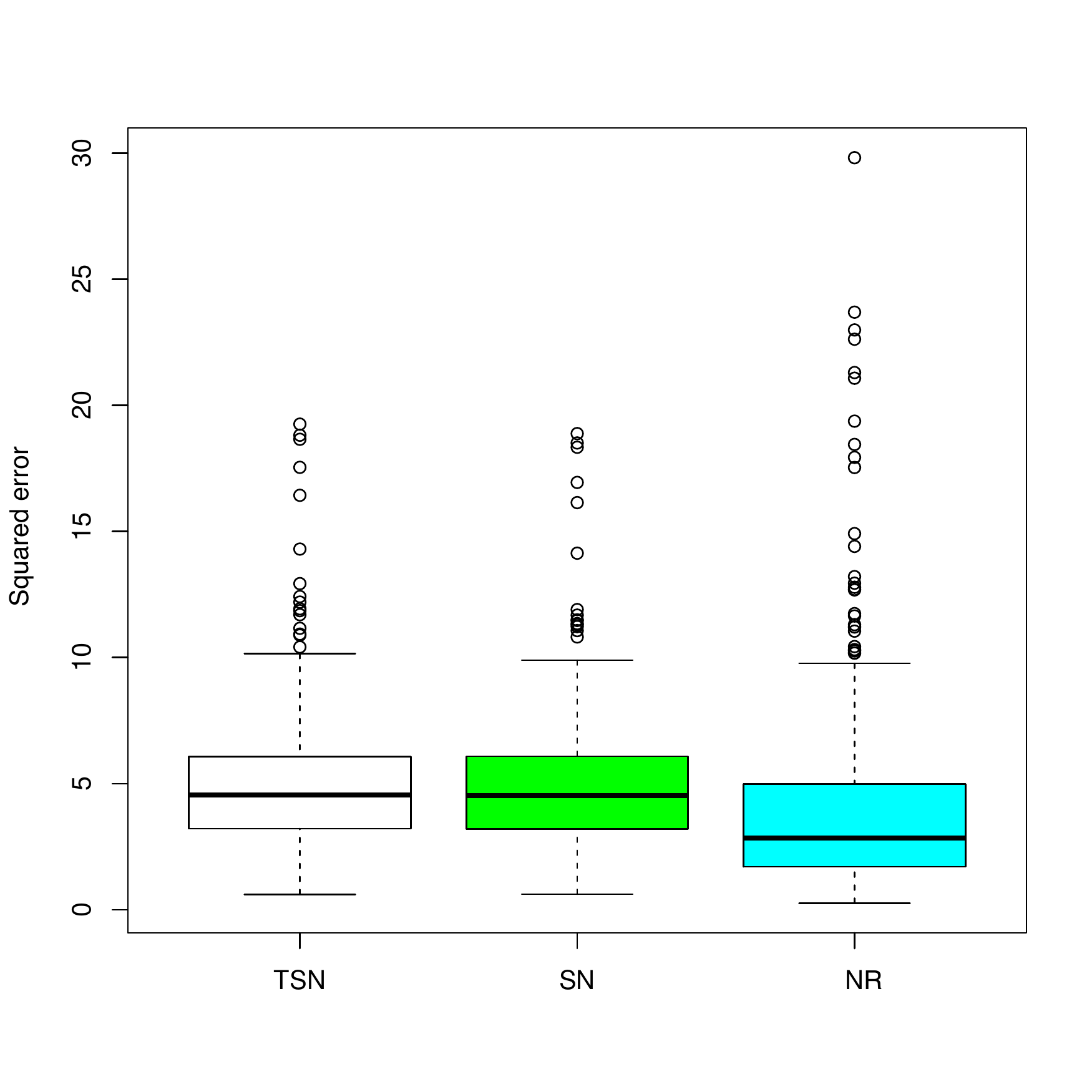}
\caption{Boxplots of the squared error for the TSN, SN and the NR algorithms.}\label{FigBOX}
\end{figure}

\subsection{Some comments concerning the truncation.}
To close this section, let us  make some comments concerning the truncation term introduced
in the TSN algorithm. This short numerical experiment tends to show that the use of the truncation is artificial and useless. 
Indeed, one can take the constant $c_\alpha$ in \eqref{defalphan} as small as possible and see that the TSN and SN algorithms match. 
Finally, an inappropriate choice of $c_\alpha$ can lead to a poor numerical behavior of the TSN algorithm. 

\newpage

\section{Proofs of the almost sure convergence results}
\label{secproofsascvg}

%%%%%%%%%%%%%%%%%%%%%%%%%%%%%%%%%%%%%%%%%%%%%%%%%%%%%%%%%%%%%%%%%%%%%%%%%%%%%%%%%%%%
\subsection{Two technical lemmas}
%%%%%%%%%%%%%%%%%%%%%%%%%%%%%%%%%%%%%%%%%%%%%%%%%%%%%%%%%%%%%%%%%%%%%%%%%%%%%%%%%%%%

We start the proofs of the almost sure convergence results with two technical lemmas.

\begin{lem}\label{LemAbel}
Assume that the random vector $\Phi$ has a finite moment of order $2$. Then, we have the almost sure convergence
for all $0<\beta<1$, 
\begin{equation}
\label{CvgAbel}
\lim_{n \to \infty}  \frac{1}{\sum_{k=1}^{n}k^{-\beta}}\sum_{k=1}^{n}k^{-\beta}\Phi_{k}\Phi_{k}^{T} = \dE\bigl[ \Phi \Phi^{T} \bigr]  \hspace{1cm}
\text{a.s}
\end{equation}
\end{lem}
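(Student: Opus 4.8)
The plan is to reduce this weighted strong law to the ordinary one by a summation-by-parts (Abel) argument, which is exactly why only a second moment on $\Phi$ is required. Write $T_k = \Phi_k\Phi_k^T$, $L = \dE[\Phi\Phi^T]$, $b_k = k^{-\beta}$, $B_n = \sum_{k=1}^n b_k$, and $S_n = \sum_{k=1}^n T_k$. Since $(X_n)$ is i.i.d., the matrices $(T_k)$ are i.i.d., and each entry $(\Phi)_i(\Phi)_j$ is integrable by Cauchy--Schwarz because $\dE\|\Phi\|^2 < \infty$; hence $\dE\|T_1\| < \infty$ and $L$ is well defined.

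First I would invoke the classical Kolmogorov strong law of large numbers, applied coordinate by coordinate to the $(d+1)^2$ scalar i.i.d.\ integrable sequences $\bigl((\Phi_k)_i(\Phi_k)_j\bigr)_k$. On a single event of probability one this yields the matrix convergence
\begin{equation*}
\frac{S_n}{n} \longrightarrow L \qquad \text{a.s.}
\end{equation*}
From here the whole argument is deterministic, carried out pathwise on this full-probability event. Note that this step uses only the first moment of $T_1$, i.e.\ the second moment of $\Phi$; this is crucial, since $T_1$ need not be square integrable and a direct $L^2$ martingale bound on the weighted sum is therefore unavailable.

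Next I would perform the Abel transform. With $S_0 = 0$,
\begin{equation*}
\sum_{k=1}^n b_k T_k = b_n S_n + \sum_{k=1}^{n-1}(b_k - b_{k+1}) S_k,
\end{equation*}
and applying the same identity to the scalar sequence $T_k \equiv 1$ (so that $S_k = k$) gives $B_n = n b_n + \sum_{k=1}^{n-1}(b_k - b_{k+1})k$. Dividing the first display by $B_n$ and writing $u_k = S_k / k$, I obtain
\begin{equation*}
\frac{1}{B_n}\sum_{k=1}^n b_k T_k = w_{n,n}\,u_n + \sum_{k=1}^{n-1} w_{n,k}\, u_k, \qquad w_{n,n} = \frac{n b_n}{B_n}, \quad w_{n,k} = \frac{(b_k - b_{k+1})k}{B_n}.
\end{equation*}
Because $b_k = k^{-\beta}$ is decreasing, $b_k - b_{k+1} \geq 0$, so all weights are nonnegative, and by construction $\sum_{k=1}^n w_{n,k} = 1$.

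Finally I would check the Toeplitz conditions and conclude. Since $0 < \beta < 1$, we have $B_n \sim n^{1-\beta}/(1-\beta) \to \infty$, whence $w_{n,k} \to 0$ as $n\to\infty$ for each fixed $k$, so the triangular array $(w_{n,k})$ is a genuine Toeplitz averaging array. As $u_k \to L$ on the full-probability event, the Toeplitz lemma (applied entrywise) gives that the weighted average converges to $L$, which is precisely \eqref{CvgAbel}. The only delicate point is the reduction itself: one must route the argument through the unweighted partial sums $S_k/k$ rather than the individual $T_k$ (which do not converge), and the monotonicity of $(b_k)$ is exactly what guarantees the nonnegativity of the weights needed for the Toeplitz lemma.
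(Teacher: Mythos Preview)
Your proof is correct and follows essentially the same route as the paper: both arguments apply the strong law of large numbers to $\Sigma_n/n$, rewrite the weighted sum via an Abel (summation-by-parts) transform, and then invoke Toeplitz's lemma. The only cosmetic difference is that the paper normalises by $n^{1-\beta}$ and treats the boundary term $n^{-\beta}\Sigma_n$ separately before applying Toeplitz to the remaining sum, whereas you normalise directly by $B_n$ and absorb the boundary term into a single Toeplitz array; the two presentations are equivalent.
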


\begin{rem}
We obtain from \eqref{defalphan} together with \eqref{defSn} that for all $n \geq 1$,
\begin{equation}
\label{LBSn}
S_n \geq c_\alpha\sum_{k=1}^{n}k^{-\beta}\Phi_{k}\Phi_{k}^{T}.
\end{equation}
Denote by $\lambda_{min}(S_n)$ the minimum eigenvalue of the positive definite matrix $S_n$. We immediately obtain from \eqref{CvgAbel} and \eqref{LBSn}
that for $n$ large enough
$$
\lambda_{min}(S_n) \geq \frac{\lambda c_\alpha}{2} \sum_{k=1}^{n}k^{-\beta} \geq \frac{\lambda c_\alpha}{2} n^{1-\beta}  \hspace{1cm}
\text{a.s}
$$
where $\lambda$ stands for the minimum eigenvalue of the positive definite deterministic matrix
$\dE\bigl[ \Phi \Phi^{T} \bigr] $. Consequently, we have under assumption \textbf{(A1)} that for $n$ large enough,
$$
\left( \lambda_{min}(S_n) \right)^{-2} \leq \frac{4}{\lambda^2 c_\alpha^2}  \frac{1}{n^{2(1- \beta)}}  \hspace{1cm}
\text{a.s}
$$
Therefore, as soon as $\beta \in ]0 ,1/2[$, 
\begin{equation}
\label{SumSn}
\sum_{n=1}^\infty \left( \lambda_{\min}\left( S_{n} \right) \right)^{-2} < \infty \hspace{1cm}
\text{a.s}
\end{equation}
\end{rem}

\begin{proof}
It follows from a straightforward Abel transform calculation that
\begin{eqnarray}
\sum_{k=1}^{n}k^{-\beta}\Phi_{k}\Phi_{k}^{T} &=& \sum_{k=1}^{n}k^{-\beta}\bigl(\Sigma_k - \Sigma_{k-1}\bigr), \nonumber \\
&=& n^{-\beta} \Sigma_n + \sum_{k=1}^{n-1}\bigl(k^{-\beta}-(k+1)^{-\beta} \bigr)\Sigma_k , \nonumber \\
&=& n^{-\beta} \Sigma_n +  \sum_{k=1}^{n-1}b_k k^{-1}\Sigma_k
\label{AbelSn}
\end{eqnarray}
where $\Sigma_0=0$ and for all $n\geq 1$, 
$$
\Sigma_n = \sum_{k=1}^n \Phi_{k}\Phi_{k}^{T}
\hspace{1cm}\text{and} \hspace{1cm}
b_n= n \bigl(n^{-\beta}-(n+1)^{-\beta}\bigr).
$$
On the one hand, we obtain from the standard strong law of large numbers that
\begin{equation}
\label{CvgSigman1}
\lim_{n \to \infty}  \frac{1}{n} \Sigma_n = \dE\bigl[ \Phi \Phi^{T} \bigr]  \hspace{1cm}
\text{a.s}
\end{equation}
On the other hand,
$$
\sum_{k=1}^n b_k= \sum_{k=1}^n k^{-\beta} - n^{1-\beta}
$$
which implies that
\begin{equation*}
\lim_{n \to \infty}  \frac{1}{n^{1-\beta}}  \sum_{k=1}^n b_k= \frac{\beta}{1-\beta}.
\end{equation*}
Then, we deduce from Toeplitz's lemma given e.g. in (\cite{Duf97}, page 54) that
\begin{equation}
\label{CvgSigman2}
\lim_{n \to \infty}  \frac{1}{n^{1-\beta}} \sum_{k=1}^{n-1}b_k k^{-1}\Sigma_k = \frac{\beta}{1-\beta}\dE\bigl[ \Phi \Phi^{T} \bigr]  \hspace{1cm}
\text{a.s}
\end{equation}
Consequently, we obtain from \eqref{AbelSn} together with \eqref{CvgSigman1} and \eqref{CvgSigman2} that
\begin{equation*}
\lim_{n \to \infty}  \frac{1}{n^{1-\beta}} 
\sum_{k=1}^{n}k^{-\beta}\Phi_{k}\Phi_{k}^{T}=\frac{1}{1-\beta}\dE\bigl[ \Phi \Phi^{T} \bigr]  \hspace{1cm}
\text{a.s}
\end{equation*}
which immediately leads to \eqref{CvgAbel}.
\end{proof}

\noindent
Our second lemma concerns a useful Lipschitz property of the function $\alpha$ defined, for all $h,\ell \in \mathbb{R}^{d+1}$, by
\begin{equation}
\label{defalpha}
\alpha \left( h,\ell \right) =  \pi \bigl( h^{T}\ell \bigr) \bigl( 1- \pi \bigl( h^{T}\ell \bigr) \bigr)
= \frac{1}{4 \left( \cosh \left( h^T \ell /2 \right) \right)^{2}}.
\end{equation}

\begin{lem}
\label{Lemalpha}
For all $h,\ell,\ell^\prime \in \mathbb{R}^{d+1}$, we have
\begin{equation}
\label{Lipalpha}
\left| \alpha \left( h,\ell \right) - \alpha \left( h,\ell^\prime \right)  \right| \leq \frac{1}{12\sqrt{3}} \left\| h \right\| \left\| \ell - \ell^\prime \right\| .
\end{equation}
\end{lem}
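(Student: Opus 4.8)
The plan is to reduce this two-variable estimate to a one-dimensional Lipschitz bound for the scalar function $f(s) = \pi(s)\left(1-\pi(s)\right) = \frac{1}{4\left(\cosh(s/2)\right)^2}$. Indeed, by \eqref{defalpha} we have $\alpha(h,\ell) = f\left(h^T\ell\right)$, so that the pair $(h,\ell)$ enters only through the scalar product $h^T\ell$. Writing $a = h^T\ell$ and $b = h^T\ell'$, the quantity to control is exactly $\left| f(a) - f(b)\right|$.

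First I would apply the mean value theorem to $f$ between $a$ and $b$, or equivalently write $f(a) - f(b) = \left(\int_0^1 f'\left(b + t(a-b)\right)dt\right)(a-b)$, to get $\left|\alpha(h,\ell) - \alpha(h,\ell')\right| \le \left(\sup_{s\in\bkR}\left|f'(s)\right|\right)\left|a - b\right|$. Since $a - b = h^T(\ell - \ell')$, the Cauchy--Schwarz inequality yields $\left|a-b\right| \le \left\|h\right\|\left\|\ell-\ell'\right\|$. This reduces the whole statement to the scalar estimate $\sup_{s\in\bkR}\left|f'(s)\right| \le C$, where $C$ is the asserted constant; everything else is routine.

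The main obstacle is thus the sharp evaluation of $\sup_s\left|f'(s)\right|$, since a crude bound like $\left|f'\right|\le 1/4$ is far too lossy. I would compute $f'(s) = -\frac14\left(\cosh(s/2)\right)^{-2}\tanh(s/2)$ and substitute $t = \tanh(s/2)\in(-1,1)$, using $\left(\cosh(s/2)\right)^{-2} = 1 - t^2$, so that $\left|f'(s)\right| = \frac14\left|t\right|\left(1-t^2\right)$. Maximizing $t\mapsto t\left(1-t^2\right)$ on $[0,1]$ through $1 - 3t^2 = 0$ gives $t = 1/\sqrt3$ and maximal value $\frac{2}{3\sqrt3}$, whence $\sup_s\left|f'(s)\right| = \frac14\cdot\frac{2}{3\sqrt3} = \frac{1}{6\sqrt3}$, which establishes the Lipschitz property. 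I would, however, flag that this sharp computation returns the constant $\frac{1}{6\sqrt3}$ rather than the stated $\frac{1}{12\sqrt3}$, a factor of two. The stated constant is in fact not attainable: the choice $h = \ell = (1,0,\dots,0)^T$ and $\ell' = 0$ already gives $\left|\alpha(h,\ell)-\alpha(h,\ell')\right| = \frac14 - \frac{1}{4\cosh^2(1/2)} \approx 0.053 > \frac{1}{12\sqrt3}\approx 0.048$. Since the subsequent results only use that $\alpha$ is Lipschitz in its second argument, the correct bound with constant $\frac{1}{6\sqrt3}$ suffices, and the factor $12$ in the statement should be corrected to $6$.
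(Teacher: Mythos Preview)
Your approach is identical to the paper's: define the scalar function $\varphi(x)=\pi(x)(1-\pi(x))$, bound $\sup_x|\varphi'(x)|$, apply the mean value theorem, and finish with Cauchy--Schwarz on $h^T(\ell-\ell')$. The paper also obtains the derivative bound $|\varphi'(x)|\le \frac{1}{6\sqrt{3}}$ (stated without the optimization you carry out, but the same value).

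You are moreover correct about the constant. The paper's own proof establishes $|\varphi(x)-\varphi(y)|\le \frac{1}{6\sqrt{3}}|x-y|$ and then, in the very next line, writes $|\alpha(h,\ell)-\alpha(h,\ell')|\le \frac{1}{12\sqrt{3}}|h^T(\ell-\ell')|$ with no justification for the extra factor of $2$; since $\alpha(h,\ell)=\varphi(h^T\ell)$, that step is simply a slip. Your counterexample confirms that the sharper constant $\frac{1}{12\sqrt{3}}$ is false as stated. As you note, nothing downstream depends on the specific constant, so replacing $12$ by $6$ repairs both the lemma and its proof without affecting the rest of the paper.
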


\begin{proof}
Let $\varphi$ be the function defined, for all $x \in \dR$, by
$$
\varphi(x)= \pi(x)(1-\pi(x))=\frac{\exp(x)}{(1+\exp(x))^2}=\frac{1}{4 ( \cosh ( x /2 ))^{2}}.
$$
We clearly have
\begin{eqnarray*}
\varphi^\prime(x)&=&\frac{\exp(x)(1-\exp(x))}{(1+\exp(x))^3}, \\
\varphi^{\prime \prime}(x)&=&\frac{\exp(x)((\exp(x))^2-4\exp(x)+1)}{(1+\exp(x))^4}.
\end{eqnarray*}
It is not hard to see that for all $x \in \dR$,
\begin{equation}
\label{Majvarphi}
| \varphi^\prime(x) | \leq \frac{1}{6\sqrt{3}}.
\end{equation}
Hence, it follows from \eqref{Majvarphi} together with the mean value theorem that for all $x,y \in \dR$,
\begin{equation}
\label{Lip1}
| \varphi(x) - \varphi(y) | \leq \frac{1}{6\sqrt{3}} | x-y |.
\end{equation}
Consequently, we obtain from \eqref{Lip1} that for all $h,\ell,\ell^\prime \in \mathbb{R}^{d+1}$, 
$$
\left| \alpha \left( h,\ell \right) - \alpha \left( h,\ell^\prime \right)  \right| \leq  \frac{1}{12\sqrt{3}} \bigl| h^T(\ell - \ell^\prime) \bigr| \leq 
\frac{1}{12\sqrt{3}} \left\| h \right\| \left\| \ell - \ell^\prime \right\|
$$
which completes the proof of Lemma \ref{Lemalpha}.
\end{proof}

\vspace{-2ex}
%%%%%%%%%%%%%%%%%%%%%%%%%%%%%%%%%%%%%%%%%%%%%%%%%%%%%%%%%%%%%%%%%%%%%%%%%%%%%%%%%%%%
\subsection{Proof of Theorem \ref{T-ASCVG}.}
%%%%%%%%%%%%%%%%%%%%%%%%%%%%%%%%%%%%%%%%%%%%%%%%%%%%%%%%%%%%%%%%%%%%%%%%%%%%%%%%%%%%

We are now in the position to proceed to the proof of the almost sure convergence 
\eqref{T-ascvg1}. By a Taylor expansion of the twice continuously differentiable functional $G$, there exists $\xi_n \in \mathbb{R}^{d+1}$ such that
\begin{equation}
\label{TaylorG1}
G \bigl( \wh{\theta}_{n+1}\bigr) = G \bigl( \wh{\theta}_{n} \bigr)  + \nabla G \bigl(  \wh{\theta}_{n} \bigr)^{T} 
\bigl(  \wh{\theta}_{n+1}- \wh{\theta}_{n} \bigr)  + \frac{1}{2}\bigl( \wh{\theta}_{n+1} -  \wh{\theta}_{n} \bigr)^{T} \nabla^{2}G ( \xi_n ) 
\bigl( \wh{\theta}_{n+1} -  \wh{\theta}_{n} \bigr) .
\end{equation}
We clearly have from \eqref{HG} that
$$
\left\| \nabla^{2}G (\xi_n) \right\| \leq \frac{1}{4} \dE\bigl[ \left\| \Phi \right\|^{2} \bigr].
$$ 
Hence, we obtain from \eqref{Defthetan} together with \eqref{TaylorG1} that
\begin{eqnarray*}
G\bigl( \wh{\theta}_{n+1}\bigr) & \leq &G \bigl( \wh{\theta}_{n} \bigr) + \nabla G \bigl(  \wh{\theta}_{n} \bigr)^{T} 
\bigl(  \wh{\theta}_{n+1}- \wh{\theta}_{n} \bigr) +\frac{1}{8} \dE\bigl[ \left\| \Phi \right\|^{2} \bigr]
\bigl\| \wh{\theta}_{n+1} -  \wh{\theta}_{n}  \bigr\|^{2}, \\
& = & G \bigl( \wh{\theta}_{n} \bigr) - \nabla G \bigl(  \wh{\theta}_{n} \bigr)^{T} S_n^{-1}Z_{n+1} +\frac{1}{8} \dE\bigl[ \left\| \Phi \right\|^{2} \bigr]
\bigl\| S_n^{-1}Z_{n+1} \bigr\|^{2}, \\
& \leq & G \bigl( \wh{\theta}_{n} \bigr) - \nabla G \bigl(  \wh{\theta}_{n} \bigr)^{T} S_n^{-1}Z_{n+1} +\frac{1}{8} \dE\bigl[ \left\| \Phi \right\|^{2} \bigr]
\left( \lambda_{\min} \left( S_{n} \right) \right)^{-2} \bigl\| Z_{n+1} \bigr\|^{2}
\end{eqnarray*}
where $Z_{n+1}=\nabla_h g \bigl( \Phi_{n+1},Y_{n+1}, \wh{\theta}_{n} \bigr)$. Since
$\left\| Z_{n+1} \right\| \leq \left\| \Phi_{n+1} \right\|$, it implies that
\begin{equation}
\label{TaylorG2}
G\bigl( \wh{\theta}_{n+1}\bigr) \leq G \bigl( \wh{\theta}_{n} \bigr) - \nabla G \bigl(  \wh{\theta}_{n} \bigr)^{T} S_n^{-1}Z_{n+1} +\frac{1}{8} \dE\bigl[ \left\| \Phi \right\|^{2} \bigr]
\left( \lambda_{\min} \left( S_{n} \right) \right)^{-2} \bigl\| \Phi_{n+1} \bigr\|^{2}.
\end{equation}
Let $\dF=(\cF_n)$ be the filtration given, for all $n \geq 1$, by 
$\cF_{n} =  \sigma \left( \left( \Phi_{1},Y_{1} \right),\ldots , \left( \Phi_{n},Y_{n} \right) \right)$. 
We clearly have $\dE\left[Z_{n+1}|\cF_{n} \right] = \nabla G \bigl( \wh{\theta}_{n} \bigr)$.
Consequently, we obtain from \eqref{TaylorG2} that
\begin{equation}
\label{TaylorG3}
\dE\bigl[ G\bigl( \wh{\theta}_{n+1}\bigr) | \cF_{n} \bigr]  \leq  G \bigl( \wh{\theta}_{n} \bigr) 
-\nabla G \bigl(  \wh{\theta}_{n} \bigr)^{T} S_n^{-1}   \nabla G \bigl(  \wh{\theta}_{n} \bigr) 
+ \frac{1}{8} \bigl(\dE\bigl[ \left\| \Phi \right\|^{2} \bigr]\bigr)^2\!\bigl( \lambda_{\min} \left( S_{n} \right)  \bigr)^{-2}
\hspace{0.5cm}\text{a.s.}
\end{equation}
Our goal is now to apply the Robbins-Siegmund theorem (\cite{Duf97}, page 18) to the three positive sequences $(V_n)$, $(A_n)$ and $(B_n)$ given by
$V_n= G \bigl( \wh{\theta}_{n} \bigr)$,
$$
A_n=\frac{1}{8} \bigl(\dE\bigl[ \left\| \Phi \right\|^{2} \bigr]\bigr)^2\bigl( \lambda_{\min} \left( S_{n} \right) \bigr)^{-2}
\hspace{1cm}\text{and}\hspace{1cm}
B_n=\bigl\| S_{n}^{-1/2} \nabla G \bigl(  \wh{\theta}_{n} \bigr) \bigr\|^{2} .
$$ 
It clearly follows from \eqref{TaylorG3} that
$$
\dE\bigl[ V_{n+1} | \cF_{n} \bigr]  \leq  V_n +A_n -B_n 
\hspace{1cm}\text{a.s.}
$$ 
Moreover, we already saw from \eqref{SumSn} that
\begin{equation}
\label{SumAn}
\sum_{n=1}^\infty A_n < \infty \hspace{1cm}
\text{a.s}
\end{equation}
Consequently, we can deduce from the Robbins-Siegmund theorem that $(V_n)$ convergences almost surely to a finite random variable and
\begin{equation}
\label{SumBn}
\sum_{n=1}^\infty B_n < \infty \hspace{1cm}
\text{a.s}
\end{equation}
Furthermore, since $B_n \geq \left(\lambda_{\max}\left( S_{n} \right) \right)^{-1} \bigl\| \nabla G \left(\wh{ \theta}_{n} \right) \bigr\|^{2}$, we get from 
\eqref{SumBn} that
\begin{equation}
\label{SumLmax1}
\sum_{n=1}^\infty \left( \lambda_{\max} \left( S_{n} \right) \right)^{-1} \bigl\| \nabla G \left( \wh{\theta}_{n} \right) \bigr\|^{2}  < \infty 
\hspace{1cm}\text{a.s}
\end{equation}
In addition, we obtain from \eqref{defalphan} together with \eqref{defSn} that
$$
\lambda_{\max} \left( S_{n} \right) \leq 1 + \frac{1}{4} \lambda_{\max} \Bigl( \sum_{k=1}^{n}\Phi_{k}\Phi_{k}^{T} \Bigr)
$$
since, for all $n \geq 1$, $\alpha_n \leq 1/4$. Therefore, \eqref{CvgSigman1} ensures that for $n$ large enough
$$
\lambda_{max}(S_n) \leq \Lambda n  \hspace{1cm}
\text{a.s}
$$
where $\Lambda$ is for the maximum eigenvalue of the positive definite deterministic matrix
$\dE\bigl[ \Phi \Phi^{T} \bigr] $. It implies that
\begin{equation}
\label{SumLmax2}
\sum_{n =1}^\infty \left( \lambda_{\max} \left( S_{n} \right) \right)^{-1} = + \infty  \hspace{1cm}\text{a.s}
\end{equation}
Hence, it follows from the conjunction of \eqref{SumLmax1} and \eqref{SumLmax2} that
$\nabla G \bigl( \wh{\theta}_{n} \bigr)$ converges to $0$ almost surely. It means that $\wh{\theta}_{n}$ converges almost surely to the unique zero $\theta$ of the gradient,
which is exactly what we wanted to prove. It remains to prove the almost sure convergence \eqref{T-ascvg2}. We infer from \eqref{defSn}
that
\begin{equation}
\label{DecOSn}
\overline{S}_{n} = 
\frac{1}{n}\sum_{k=1}^{n}\bigl( \alpha_{k} -\alpha \bigl( \Phi_{k},\wh{\theta}_{k-1} \bigr) \bigr)\Phi_{k}\Phi_{k}^{T} + 
\frac{1}{n}\sum_{k=1}^{n}\alpha \bigl( \Phi_{k} , \wh{\theta}_{k-1} \bigr) \Phi_{k}\Phi_{k}^{T} +
\frac{1}{n}I_{d+1}. 
\end{equation}
We now give the convergence of the two terms on the right-hand side of \eqref{DecOSn}. For the first one, one can observe that
$\alpha_{n} = \alpha \bigl( \Phi_{n} , \wh{\theta}_{n-1} \bigr)$ as soon as $\alpha \bigl( \Phi_{n} , \wh{\theta}_{n-1} \bigr) \geq c_{\alpha}n^{-\beta}$.
Consequently,
$$
\frac{1}{n}\sum_{k=1}^{n}\bigl( \alpha_{k} -\alpha \bigl( \Phi_{k},\wh{\theta}_{k-1} \bigr) \bigr)\Phi_{k}\Phi_{k}^{T} 
= \frac{1}{n}\sum_{k=1}^{n}\bigl( \alpha_{k} -\alpha \bigl( \Phi_{k},\wh{\theta}_{k-1} \bigr) \bigr)\Phi_{k}\Phi_{k}^{T}\,
\rI_{\bigl\{\alpha ( \Phi_{k} , \wh{\theta}_{k-1} ) \leq c_{\alpha}k^{-\beta}\bigr\}}.
$$
It implies that
$$
 \Bigl\|  \frac{1}{n} \sum_{k=1}^{n}\bigl( \alpha_{k} -\alpha \bigl( \Phi_{k},\wh{\theta}_{k-1} \bigr) \bigr)\Phi_{k}\Phi_{k}^{T}  \Bigr\| \leq
 \frac{c_\alpha}{n}\sum_{k=1}^{n} \frac{1}{k^{\beta}} \bigl\| \Phi_{k} \bigr\|^2.
$$
However, one can easily check from Lemma \ref{LemAbel}  that 
$$
\lim_{n\to \infty} \frac{1}{n}\sum_{k=1}^{n} \frac{1}{k^{\beta}}\bigl\| \Phi_{k} \bigr\|^2 = 0 
 \hspace{1cm}\text{a.s}
$$
It means that the first term on the right-hand side of \eqref{DecOSn} goes to $0$ almost surely.
We now study the convergence of the second term on the right-hand side of \eqref{DecOSn} which can be rewritten as
$$
\frac{1}{n}\sum_{k=1}^{n}\alpha \bigl( \Phi_{k} , \wh{\theta}_{k-1} \bigr) \Phi_{k}\Phi_{k}^{T}=
\frac{1}{n}\sum_{k=1}^{n}\alpha \bigl( \Phi_{k} , \theta \bigr) \Phi_{k}\Phi_{k}^{T}.
+\frac{1}{n}\sum_{k=1}^{n}\bigl( \alpha \bigl( \Phi_{k} , \wh{\theta}_{k-1} \bigr) - \alpha \bigl( \Phi_{k} , \theta \bigr) \bigr)\Phi_{k}\Phi_{k}^{T}
$$
On the one hand, thanks to the standard strong law of large numbers, we clearly have
\begin{equation}
\label{ascvgHG}
\lim_{n\to \infty} \frac{1}{n}\sum_{k=1}^{n} \alpha \bigl( \Phi_{k} , \theta \bigr) \Phi_{k}\Phi_{k}^{T}
= \dE\bigl[  \alpha \bigl( \Phi , \theta \bigr) \Phi\Phi^{T}\bigr]= \nabla^2G(\theta)
 \hspace{1cm}\text{a.s}
\end{equation}
On the other hand, denote by $R_n$ the remainder 
$$
R_n=\sum_{k=1}^{n}\bigl( \alpha \bigl( \Phi_{k} , \wh{\theta}_{k-1} \bigr) - \alpha \bigl( \Phi_{k} , \theta \bigr) \bigr)\Phi_{k}\Phi_{k}^{T}.
$$
We can split $R_n$ into two terms $R_n=P_n+Q_n$ where, for some positive constant $M$,
\begin{eqnarray*}
P_n & = & \sum_{k=1}^{n}\bigl( \alpha \bigl( \Phi_{k} , \wh{\theta}_{k-1} \bigr) - \alpha \bigl( \Phi_{k} , \theta \bigr) \bigr)\Phi_{k}\Phi_{k}^{T} \, \rI_{\bigl\{ \left\| \Phi_{k} \right\| \leq M \bigr\}} \\
Q_n & = & \sum_{k=1}^{n}\bigl( \alpha \bigl( \Phi_{k} , \wh{\theta}_{k-1} \bigr) - \alpha \bigl( \Phi_{k} , \theta \bigr) \bigr)\Phi_{k}\Phi_{k}^{T} \, \rI_{\bigl\{ \left\| \Phi_{k} \right\| > M \bigr\}}.
\end{eqnarray*}
It follows from the Lipschitz property of the function $\alpha$ given is Lemma \ref{Lemalpha} that
$$
\Bigl\| \frac{1}{n}P_{n}\Bigr\| \leq \frac{M}{12\sqrt{3}}\frac{1}{n}\sum_{k=1}^{n} \bigl\| \wh{\theta}_{k-1} - \theta \bigr\| \left\| \Phi_{k}\right\|^{2}.
$$
Hence, we deduce from \eqref{T-ascvg1} together with \eqref{CvgSigman1} that
\begin{equation}
\label{AscvgPn}
\lim_{n\to \infty} \frac{1}{n}P_n=0  \hspace{1cm}\text{a.s}
\end{equation}
Furthermore, we also have
$$
\Bigl\| \frac{1}{n}Q_{n}\Bigr\| \leq \frac{1}{2n}\sum_{k=1}^{n}  \left\| \Phi_{k}\right\|^{2} \, \rI_{\bigl\{ \left\| \Phi_{k} \right\| > M \bigr\}}.
$$
We deduce once again from the strong law of large numbers that
$$
\lim_{n\to \infty} \frac{1}{n} \sum_{k=1}^{n} \left\| \Phi_{k}\right\|^{2} \, \rI_{\bigl\{ \left\| \Phi_{k} \right\| > M \bigr\}}= \dE\bigl[ \left\| \Phi \right\|^{2} \, \rI_{\bigl\{ \left\| \Phi \right\| > M \bigr\}}\bigr]
\hspace{1cm}\text{a.s}
$$
which implies via \eqref{AscvgPn} that for any positive constant $M$,
\begin{equation}
\label{limsupRn}
\limsup_{n\to \infty} \Bigl\| \frac{1}{n}R_{n}\Bigr\| \leq \frac{1}{2} \dE\bigl[ \left\| \Phi \right\|^{2} \, \rI_{\bigl\{ \left\| \Phi \right\| > M \bigr\}}\bigr]
\hspace{1cm}\text{a.s}
\end{equation}
Nonetheless, we obtain from the Lebesgue dominated convergence theorem that
$$
\lim_{M \to \infty} \dE\bigl[ \left\| \Phi \right\|^{2} \, \rI_{\bigl\{ \left\| \Phi \right\| > M \bigr\}}\bigr]=0.
$$
Consequently, we find from \eqref{limsupRn} 
$$
\lim_{n\to \infty} \frac{1}{n}R_n=0  \hspace{1cm}\text{a.s}
$$
Finally, \eqref{T-ascvg2} follows from \eqref{DecOSn} and \eqref{ascvgHG}, which achieves the proof of Theorem \ref{T-ASCVG}.
\demend

\vspace{-2ex}

%%%%%%%%%%%%%%%%%%%%%%%%%%%%%%%%%%%%%%%%%%%%%%%%%%%%%%%%%%%%%%%%%%%%%%%%%%%%%%%%%%%%
\subsection{Proof of Theorem \ref{T-RATE}.}
%%%%%%%%%%%%%%%%%%%%%%%%%%%%%%%%%%%%%%%%%%%%%%%%%%%%%%%%%%%%%%%%%%%%%%%%%%%%%%%%%%%%

It follows from equation  \eqref{Defthetan} that for all $n \geq 1$,
\begin{equation*}
\wh{\theta}_{n+1}-\theta =   \wh{\theta}_{n} - \theta - \frac{1}{n} \Bigl( \overline{S}_{n}^{-1} -S^{-1} \Bigr)Z_{n+1}
- \frac{1}{n} S^{-1} Z_{n+1}
\end{equation*}
where $Z_{n+1}=\nabla_h g \bigl( \Phi_{n+1},Y_{n+1}, \wh{\theta}_{n} \bigr)$ and $S=\nabla^{2}G(\theta)$.
Consequently,
\begin{equation}
\label{DecR1}
\wh{\theta}_{n+1}-\theta=  \wh{\theta}_{n} - \theta - \frac{1}{n} \Bigl( \overline{S}_{n}^{-1} -S^{-1} \Bigr)Z_{n+1}
- \frac{1}{n} S^{-1} \Bigl( \nabla G \bigl(  \wh{\theta}_{n} \bigr) + \varepsilon_{n+1}\Bigr)
\end{equation}
where 
$\varepsilon_{n+1} = Z_{n+1} - \nabla G \bigl(  \wh{\theta}_{n} \bigr)$. We already saw that $\dE\left[Z_{n+1}|\cF_{n} \right] = \nabla G \bigl( \wh{\theta}_{n} \bigr)$
which clearly implies that $( \varepsilon_{n})$ is a martingale difference sequence, $\dE\left[\varepsilon_{n+1}|\cF_{n} \right] =0$.
Denote by $\delta_n$ the remainder of the Taylor's expansion of the gradient
$$
\delta_{n} = \nabla G \bigl(  \wh{\theta}_{n} \bigr)  - \nabla^{2}G(\theta) \bigl(  \wh{\theta}_{n} - \theta \bigr)=
\nabla G \bigl(  \wh{\theta}_{n} \bigr) - S \bigl(  \wh{\theta}_{n} - \theta \bigr).
$$
We deduce from \eqref{DecR1} that for all $n \geq 1$,
\begin{equation}
\label{DecR2}
\wh{\theta}_{n+1}-\theta=  \Bigl( 1- \frac{1}{n} \Bigr)\bigl(\wh{\theta}_{n} - \theta\bigr) - \frac{1}{n} \Bigl( \overline{S}_{n}^{-1} -S^{-1} \Bigr)Z_{n+1}
- \frac{1}{n} S^{-1} \Bigl( \delta_n + \varepsilon_{n+1}\Bigr),
\end{equation}
which leads to
\begin{eqnarray}
\wh{\theta}_{n+1}-\theta & = & - \frac{1}{n} \sum_{k=1}^n \Bigl( \overline{S}_{k}^{-1} -S^{-1} \Bigr)Z_{k+1}
- \frac{1}{n}S^{-1} \sum_{k=1}^n \Bigl( \delta_k + \varepsilon_{k+1}\Bigr), \nonumber \\
& = & - \frac{1}{n} \sum_{k=1}^n \Bigl( \overline{S}_{k}^{-1} -S^{-1} \Bigr)\Bigl( \varepsilon_{k+1} + \nabla G \bigl(  \wh{\theta}_{k} \bigr)\Bigr)
- \frac{1}{n}S^{-1} \sum_{k=1}^n \Bigl( \delta_k + \varepsilon_{k+1}\Bigr), \nonumber \\
& = & - \frac{1}{n} M_{n+1} -\Delta_{n}
\label{DecR3}
\end{eqnarray}
where
$$
M_{n+1}=\sum_{k=1}^n \overline{S}_{k}^{-1} \varepsilon_{k+1}
$$
and
\begin{eqnarray}
\Delta_{n} & = &  \frac{1}{n} \sum_{k=1}^n \Bigl( \overline{S}_{k}^{-1} -S^{-1} \Bigr)\nabla G \bigl(  \wh{\theta}_{k} \bigr)
+ \frac{1}{n}S^{-1} \sum_{k=1}^n  \delta_k, \nonumber \\
& = &  \frac{1}{n} \sum_{k=1}^n \Bigl( \overline{S}_{k}^{-1} -S^{-1} \Bigr)S\Bigl(  \wh{\theta}_{k} - \theta \Bigr)
+ \frac{1}{n} \sum_{k=1}^n \overline{S}_{k}^{-1} \delta_k.
\label{DecDeltan}
\end{eqnarray}
We claim that the remainder $\delta_n$ is negligeable. As a matter of fact,
\begin{eqnarray*}
\| \delta_{n} \|  &=& \Bigl\| \int_{0}^{1} \nabla^{2}G \bigl( \theta + t\bigl( \wh{\theta}_{n} - \theta \bigr) \bigr) \bigl( \wh{\theta}_{n} - \theta \bigr) dt - \nabla^{2} G( \theta) \bigl( \wh{\theta}_{n} - \theta 
\bigr) \Bigr\|,   \\
& \leq & \int_{0}^{1} \Bigl\| \nabla^{2}G \bigl( \theta + t\bigl( \wh{\theta}_{n} - \theta \bigr) \bigr)   - \nabla^{2} G( \theta) \Bigr\| dt \bigl\|  \wh{\theta}_{n} - \theta \bigr\|.
\end{eqnarray*}
However, the functional $G$ is twice continuously differentiable and $\wh{\theta}_{n}$ converges almost surely to  $\theta$, which ensures that
\begin{equation}
\label{DecR4}
\| \delta_{n} \| = o \bigl( \bigl \| \wh{\theta}_{n} - \theta \bigr\| \bigr) \hspace{1cm}
\text{a.s}
\end{equation}
Then, we obtain from \eqref{T-ascvg2},  \eqref{DecDeltan} and \eqref{DecR4} that it exists a constant $0<c<1/2$ and a finite positive random variable $D$ such that for all $n \geq 1$,
\begin{equation}
\label{DecR5}
\| \Delta_{n} \| \leq cL_n + \frac{1}{n}D \hspace{1cm}
\text{a.s}
\end{equation}
where
$$
L_n = \frac{1}{n} \sum_{k=1}^n \|  \wh{\theta}_{k}-\theta \|.
$$
Therefore, we deduce from \eqref{DecR3} and \eqref{DecR5} that for all $n \geq 1$,
\begin{eqnarray*}
L_{n+1}  &=& \Bigl(1- \frac{1}{n+1} \Bigl)L_{n} + \frac{1}{n+1} \|  \wh{\theta}_{n+1}-\theta \|, \\
&\leq & \Bigl(1- \frac{1}{n+1} \Bigl)L_{n} + \frac{1}{n+1} \Bigl( \frac{1}{n} \| M_{n+1} \| +  \| \Delta_{n}\|\Bigr) \hspace{1cm} \text{a.s} \\
&\leq & \Bigl(1- \frac{d}{n+1} \Bigl)L_{n} + \frac{1}{n(n+1)} \Bigl( \| M_{n+1} \| +  D\Bigr) \hspace{1cm} \text{a.s}
\end{eqnarray*}
where $d=1-c$. It clearly implies by induction that for all $n \geq 1$,
\begin{equation}
\label{DecR6}
L_n \leq \prod_{k=2}^n \Bigl(1 - \frac{d}{k}  \Bigr) L_1 
+ \sum_{k=2}^n \prod_{i=k+1}^n  \Bigl(1 - \frac{d}{i} \Bigr) \frac{1}{k(k+1)} \Bigl( \| M_{k+1} \| +  D\Bigr) \hspace{1cm} \text{a.s}
\end{equation}
Hereafter, we shall proceed to the evaluation of the right-hand side term in \eqref{DecR6}. The sequence $(M_n)$ 
is a locally square-integrable multi-dimensional martingale with predictable quadratic variation given by
\begin{equation}
\label{CVM}
\left\langle M \right\rangle_{n} = \sum_{k=2}^{n} \overline{S}_{k-1}^{-1} \dE \bigl[ \varepsilon_{k}\varepsilon_{k}^{T} |\cF_{k-1} \bigr] \overline{S}_{k-1}^{-1}.
\end{equation}
However, for all $n \geq 1$,
\begin{eqnarray*}
\dE\bigl[ \varepsilon_{n+1}\varepsilon_{n+1}^T |\cF_{n} \bigr] & = & \dE\Bigl[ Z_{n+1}Z_{n+1}^{T} |\cF_{n} \Bigr] - \nabla G \bigl(  \wh{\theta}_{n} \bigr) \nabla G \bigl(  \wh{\theta}_{n} \bigr)^T \\
& = & \dE\Bigl[ \Bigl( \pi \bigl( \widehat{\theta}_{n}^T\Phi_{n+1} \bigr) - Y_{n+1}\Bigr)^{2}\Phi_{n+1}\Phi_{n+1}^{T} |\cF_{n} \Bigr] - 
 \nabla G \bigl(  \wh{\theta}_{n} \bigr) \nabla G \bigl(  \wh{\theta}_{n} \bigr)^T \\
& = & \dE\Bigl[ \Bigl(  \pi \bigl( \widehat{\theta}_{n}^T\Phi_{n+1} \bigr) -  \pi \bigl( \theta^T\Phi_{n+1} \bigr) + \pi \bigl( \theta^T\Phi_{n+1} \bigr)
-Y_{n+1}\Bigr)^{2}\Phi_{n+1}\Phi_{n+1}^{T} |\cF_{n} \Bigr] \\
& - &  \nabla G \bigl(  \wh{\theta}_{n} \bigr) \nabla G \bigl(  \wh{\theta}_{n} \bigr)^T.
\end{eqnarray*}
Since $\dE\bigl[ Y_{n+1} |\Phi_{n+1} \bigr] = \pi \left( \theta^T\Phi_{n+1} \right)$, we obtain that for all $n \geq 1$,
\begin{eqnarray*}
\dE\bigl[ \varepsilon_{n+1}\varepsilon_{n+1}^T |\cF_{n} \bigr] & = & \dE\Bigl[ \Bigl(  \pi \bigl( \widehat{\theta}_{n}^T\Phi_{n+1} \bigr) -  \pi \bigl( \theta^T\Phi_{n+1} \bigr)\Bigr)^{2}\Phi_{n+1}\Phi_{n+1}^{T} |\cF_{n} \Bigr] \\
& + &  \dE\Bigl[ \Bigl(  \pi \bigl( \theta^T\Phi_{n+1} \bigr) -  Y_{n+1} )\Bigr)^{2}\Phi_{n+1}\Phi_{n+1}^{T} |\cF_{n} \Bigr] - \nabla G \bigl(  \wh{\theta}_{n} \bigr) \nabla G \bigl(  \wh{\theta}_{n} \bigr)^T \\
& = & \dE\Bigl[ \Bigl(  \pi \bigl( \widehat{\theta}_{n}^T\Phi_{n+1} \bigr) -  \pi \bigl( \theta^T\Phi_{n+1} \bigr)\Bigr)^{2}\Phi_{n+1}\Phi_{n+1}^{T} |\cF_{n} \Bigr]  + \nabla^{2}G ( \theta) \\
& - & \nabla G \bigl(  \wh{\theta}_{n} \bigr) \nabla G \bigl(  \wh{\theta}_{n} \bigr)^T.
\end{eqnarray*}
By continuity together with \eqref{T-ascvg1}, we have the almost sure convergences
$$
\lim_{n \rightarrow \infty} \nabla G \bigl(  \wh{\theta}_{n} \bigr) \nabla G \bigl(  \wh{\theta}_{n} \bigr)^T=0 \hspace{1cm} \text{a.s}
$$
and
$$
\lim_{n \rightarrow \infty} \dE\Bigl[ \Bigl(  \pi \bigl( \widehat{\theta}_{n}^T\Phi_{n+1} \bigr) -  \pi \bigl( \theta^T\Phi_{n+1} \bigr)\Bigr)^{2}\Phi_{n+1}\Phi_{n+1}^{T} |\cF_{n} \Bigr] = 0 \hspace{1cm} \text{a.s}
$$
Therefore, we obtain from \eqref{T-ascvg2} and \eqref{CVM} that
\begin{equation}
\label{LimCVM}
\lim_{n \to \infty} \frac{1}{n}\langle M \rangle_{n} = \bigl(\nabla^{2}G \left( \theta \right)\bigr)^{-1} \hspace{1cm} \text{a.s}
\end{equation}
Hence, it follows from the strong law of large numbers for multi-dimensional martingales given e.g. by
Theorem 4.13.16 in \cite{Duf97} that for any $\gamma >0$,
\begin{equation}
\label{DecR7}
\bigl\| M_n \bigr\|^2= o\bigl( n \bigl( \log n \bigr)^{1+\gamma}\bigr) \hspace{1cm} \text{a.s}
\end{equation}
Moreover, if the random vector $\Phi$ has a finite moment of order $>2$, we also have the more precise
almost sure rate of convergence
\begin{equation}
\label{DecR7Sharp}
\bigl\| M_n \bigr\|^2= O\bigl( n \log n\bigr) \hspace{1cm} \text{a.s}
\end{equation}
We will prove \eqref{T-rate2} inasmuch as the proof for \eqref{T-rate1} follows essentially the same lines.
We deduce from \eqref{DecR7Sharp} that it exists a finite positive random variable $C$ such that 
for all $n \geq 1$
\begin{equation}
\label{DecR8}
\bigl\| M_{n+1} \bigr\| \leq C \sqrt{n \log n} \hspace{1cm} \text{a.s}
\end{equation}
We are now in position to find an upper-bound for inequality \eqref{DecR6}.
Via the elementary $1-x \leq \exp(-x)$, we clearly have
$$
\prod_{k=2}^n \Bigl(1 - \frac{d}{k}  \Bigr) \leq \Bigl(\frac{2}{n+1}\Bigr)^d
 \hspace{1cm}\text{and} \hspace{1cm}
\prod_{i=k+1}^n \Bigl(1 - \frac{d}{i}  \Bigr) \leq \Bigl(\frac{k+1}{n+1}\Bigr)^d.
$$
Consequently, we obtain from \eqref{DecR6} and \eqref{DecR8} that for all $n \geq 1$,
$$
L_n  \leq   \Bigl(\frac{2}{n+1}\Bigr)^d L_1+ 
\sum_{k=2}^n \Bigl(\frac{k+1}{n+1}\Bigr)^d \frac{1}{k(k+1)} \Bigl( C  \sqrt{k \log k}+  D\Bigr) \hspace{1cm} \text{a.s}
$$
leading to
\begin{equation}
\label{DecR9}
L_n \leq   \Bigl(\frac{2}{n}\Bigr)^d L_1+ 
\frac{A\bigl( \log n \bigr)^{1/2}}{n^d}
\sum_{k=2}^n \frac{1}{k^{a}} \hspace{1cm} \text{a.s}
\end{equation}
where $A=\max(C,D)$ and $a=3/2 - d=1/2+c$. Hereafter, we recall that the positive constant $c$ has been chosen 
such that $c<1/2$ which means that $0<a<1$. Hence, we find from \eqref{DecR9} that for all $n \geq 1$,
\begin{eqnarray*}
L_n  & \leq &  \Bigl(\frac{2}{n}\Bigr)^d L_1+ 
\frac{A\bigl( \log n \bigr)^{1/2}}{n^{d+a-1}} \hspace{1cm} \text{a.s}\\
& \leq &  \Bigl(\frac{2}{n}\Bigr)^d L_1+ A\Bigl(
\frac{ \log n}{n}\Bigr)^{1/2} \hspace{1cm} \text{a.s}
\end{eqnarray*}
Since $d>1/2$, it immediately implies that
\begin{equation}
\label{DecR10}
L_n ^2 = O  \Bigl(\frac{ \log n}{n}\Bigr) \hspace{1cm} \text{a.s}
\end{equation}
Then, it follows from the conjunction of \eqref{DecR5} and \eqref{DecR10} that
\begin{equation*}
\| \Delta_{n} \|^2  = O \Bigl(\frac{\log n}{n} \Bigr) \hspace{1cm} \text{a.s}
\end{equation*}
It ensures, via \eqref{DecR3} and \eqref{DecR7Sharp}, that
\begin{equation*}
\| \wh{\theta}_{n}-\theta \|^2  = O \Bigl(\frac{\log n}{n} \Bigr) \hspace{1cm} \text{a.s}
\end{equation*}
which is exactly what we wanted to prove.
\demend

\vspace{-2ex}

%%%%%%%%%%%%%%%%%%%%%%%%%%%%%%%%%%%%%%%%%%%%%%%%%%%%%%%%%%%%%%%%%%%%%%%%%%%%%%%%%%%%
\subsection{Proof of Theorem \ref{T-RATEH}.}
%%%%%%%%%%%%%%%%%%%%%%%%%%%%%%%%%%%%%%%%%%%%%%%%%%%%%%%%%%%%%%%%%%%%%%%%%%%%%%%%%%%%

First of all, it follows from \eqref{defSn} that $\overline{S}_{n}$ can be splitted  into two terms
\begin{eqnarray}
\overline{S}_{n} &  = & \frac{1}{n}\sum_{k=1}^{n} \alpha_{k}\Phi_{k}\Phi_{k}^T + \frac{1}{n}I_{d+1} \nonumber\\
& = & \frac{1}{n} T_n +  \frac{1}{n}\sum_{k=1}^{n} \dE\bigl[\alpha_{k}\Phi_{k}\Phi_{k}^T | \cF_{k-1} \bigr] + \frac{1}{n}I_{d+1} 
\label{DecRH1}
\end{eqnarray}
where
$$
T_n = \sum_{k=1}^{n} \alpha_{k}\Phi_{k}\Phi_{k}^T - \dE\bigl[\alpha_{k}\Phi_{k}\Phi_{k}^T | \cF_{k-1} \bigr].
$$
The sequence $(T_n)$ is a locally square-integrable multi-dimensional martingale. Since the random vector $\Phi$ has a finite moment
of order $4$ and for all $n \geq 1$, $\alpha_n \leq 1/4$, we obtain from the strong law of large numbers for multi-dimensional martingales given e.g. by
Theorem 4.13.16 in \cite{Duf97} that for any $\gamma >0$,
\begin{equation}
\label{DecRH2}
\bigl\| T_n \bigr\|^2= o\bigl( n \bigl( \log n \bigr)^{1+\gamma}\bigr) \hspace{1cm} \text{a.s}
\end{equation}
Let us now give the rate of convergence of the second term on the right-hand side of \eqref{DecRH1}. 
On the one hand, we have the decomposition
$$
\sum_{k=1}^{n} \alpha_{k}\Phi_{k}\Phi_{k}^T= \sum_{k=1}^{n}\bigl( \alpha_{k} -\alpha \bigl( \Phi_{k},\wh{\theta}_{k-1} \bigr) \bigr)\Phi_{k}\Phi_{k}^{T}+
\sum_{k=1}^{n}\alpha \bigl( \Phi_{k},\wh{\theta}_{k-1} \bigr) \Phi_{k}\Phi_{k}^{T}.
$$
We already saw in the proof of Theorem \ref{T-ASCVG} that
\begin{equation}
\label{DecRH3}
\sum_{k=1}^{n}\bigl( \alpha_{k} -\alpha \bigl( \Phi_{k},\wh{\theta}_{k-1} \bigr) \bigr)\Phi_{k}\Phi_{k}^{T}\leq 
c_\alpha\sum_{k=1}^{n} \frac{1}{k^{\beta}} \Phi_{k}\Phi_{k}^{T}
\end{equation}
Hence, by taking the conditionnal expectation on both sides of \eqref{DecRH3}, we obtain that
\begin{eqnarray}
\Bigl \| \sum_{k=1}^{n} \dE\Bigl[\bigl( \alpha_{k} -\alpha \bigl( \Phi_{k},\wh{\theta}_{k-1} \bigr) \bigr)\Phi_{k}\Phi_{k}^{T} | \cF_{k-1} \Bigr] \Bigr \|
& \leq & c_\alpha\sum_{k=1}^{n} \frac{1}{k^{\beta}} \dE \bigl[ \|\Phi_{k}\|^{2} |  \cF_{k-1}\bigr] \nonumber   \\
& \leq &  \frac{c_\alpha \dE \bigl[ \|\Phi\|^{2}] }{1-\beta} n^{1-\beta}.
\label{DecRH4}
\end{eqnarray}
On the other hand,
$$
\sum_{k=1}^{n}\dE\Bigl[ \alpha \bigl( \Phi_{k},\wh{\theta}_{k-1} \bigr) \Phi_{k}\Phi_{k}^{T}  | \cF_{k-1} \Bigr]
= \sum_{k=1}^n  \Bigl( \nabla^{2}G \bigl( \wh{\theta}_{k-1} \bigr) -  \nabla^{2}G \bigl( \theta \bigr) \Bigr) + n \nabla^{2}G \bigl( \theta \bigr).
$$
Consequently, we immediately deduce from inequality \eqref{Lipalpha} that
\begin{eqnarray*}
\Bigl \| \sum_{k=1}^{n}\dE\Bigl[ \alpha \bigl( \Phi_{k},\wh{\theta}_{k-1} \bigr) \Phi_{k}\Phi_{k}^{T}  | \cF_{k-1} \Bigr] - n \nabla^{2}G \bigl( \theta \bigr) \Bigr\|
& \leq & \sum_{k=1}^n  \Bigl\| \nabla^{2}G \bigl( \wh{\theta}_{k-1} \bigr) -  \nabla^{2}G \bigl( \theta \bigr) \Bigr\|  \\
& \leq & \frac{1}{12 \sqrt{3} }\sum_{k=1}^{n} \bigl \| \wh{\theta}_{k-1} - \theta \bigr \| \dE \bigl[ \|\Phi_{k}\|^{3} |  \cF_{k-1}\bigr] 
\end{eqnarray*}
which implies that
\begin{equation}
\label{DecRH5}
\Bigl \| \sum_{k=1}^{n}\dE\Bigl[ \alpha \bigl( \Phi_{k},\wh{\theta}_{k-1} \bigr) \Phi_{k}\Phi_{k}^{T}  | \cF_{k-1} \Bigr] - n \nabla^{2}G \bigl( \theta \bigr) \Bigr\|
\leq \frac{\dE \bigl[ \|\Phi\|^{3}]}{12 \sqrt{3} } \sum_{k=1}^{n} \bigl \| \wh{\theta}_{k-1} - \theta \bigr \| 
\end{equation}
Finally, it follows from the conjunction of \eqref{DecRH2},  \eqref{DecRH4} and  \eqref{DecRH5} together with \eqref{DecR10}
that for all $0<\beta<1/2$,
\begin{equation*}
 \bigl\| \overline{S}_{n} - \nabla^{2}G \left( \theta \right) \bigr\|^{2} = O \left( \frac{1}{n^{2\beta}} \right) \hspace{1cm}\text{a.s}
\end{equation*}
which achieves the proof of \eqref{T-rate3}. Moreover, we obtain \eqref{T-rate4} from \eqref{T-rate3}
via the identity
$$
\overline{S}_{n}^{-1} - \left( \nabla^{2}G \left( \theta \right) \right)^{-1}=
\overline{S}_{n}^{-1}\left( \nabla^{2}G \left( \theta \right)  - \overline{S}_{n} \right) \left( \nabla^{2}G \left( \theta \right) \right)^{-1}.
$$

\section{Proofs of the asymptotic normality result}
\label{secproofsan}
We are now in the position to proceed to the proof of the asymptotic normality \eqref{T-an1}. We clearly have from
\eqref{DecR3} that

\begin{equation}
\label{Pran1}
\sqrt{n} \bigl( \wh{\theta}_{n+1}-\theta \bigr) = - \frac{1}{\sqrt{n}} M_{n+1} - R_n
\end{equation}
where the remainder $R_n=\sqrt{n} \Delta_{n}$. First of all, we claim that 
\begin{equation}
\label{Pran2}
\lim_{n \rightarrow \infty} R_n = 0  \hspace{1cm} \text{a.s}
\end{equation}
As a matter of fact, it follows from \eqref{DecDeltan} that $R_n=P_n+Q_n$ where
\begin{eqnarray*}
P_n & = & \frac{1}{\sqrt{n}} \sum_{k=1}^n \Bigl( \overline{S}_{k}^{-1} -S^{-1} \Bigr)S\Bigl(  \wh{\theta}_{k} - \theta \Bigr), \\
Q_n & = & \frac{1}{\sqrt{n}} \sum_{k=1}^n \overline{S}_{k}^{-1} \delta_k.
\end{eqnarray*}
We have from \eqref{T-rate2} together with \eqref{T-rate4} that
\begin{equation*}
\| P_n \|=O \left(\frac{1}{\sqrt{n}} \sum_{k=1}^n \frac{1}{k^\beta} \frac{\sqrt{\log k }}{\sqrt{k}}\right)=O \left(\frac{\sqrt{\log n}}{n^\beta}\right)
\hspace{1cm} \text{a.s}
\end{equation*}
which implies that
\begin{equation}
\label{Pran3}
\lim_{n \rightarrow \infty} P_n = 0  \hspace{1cm} \text{a.s}
\end{equation}
Moreover, we obtain from inequality \eqref{Lipalpha} and \eqref{T-rate2} that
\begin{equation*}
\| Q_n \|=O \left(\frac{1}{\sqrt{n}} \sum_{k=1}^n \bigl\| \wh{\theta}_{k}-\theta \bigr\|^2\right) = O \left(\frac{1}{\sqrt{n}} \sum_{k=1}^n \frac{\log k }{k}
\right) = O \left(\frac{(\log n)^2}{\sqrt{n}}\right)
\hspace{1cm} \text{a.s}
\end{equation*}
which also implies that
\begin{equation}
\label{Pran4}
\lim_{n \rightarrow \infty} Q_n = 0  \hspace{1cm} \text{a.s}
\end{equation}
Consequently, \eqref{Pran3} and \eqref{Pran4} clearly lead to convergence \eqref{Pran2}. Hereafter, it only remains to study the asymptotic behavior
of the martingale term $M_n$. We already saw from \eqref{LimCVM} that its predictable quadratic variation $\langle M \rangle_{n}$ satisfies
\begin{equation*}
\lim_{n \to \infty} \frac{1}{n}\langle M \rangle_{n} = \bigl(\nabla^{2}G \left( \theta \right)\bigr)^{-1} \hspace{1cm} \text{a.s}
\end{equation*}
In addtion, as $\varepsilon_{n+1} = \nabla_h g \bigl( \Phi_{n+1},Y_{n+1}, \wh{\theta}_{n} \bigr) - \nabla G \bigl(  \wh{\theta}_{n} \bigr)$, we clearly have
the very simple upper-bound
$$
\bigl\| \varepsilon_{n+1} \bigr\| \leq \bigl\| \Phi_{n+1} \bigr\| + \dE\bigl[ \bigl\| \Phi \bigr\| \bigr] .
$$
Hence, since $\Phi$ has a finite moment of order $4$, 
\begin{equation}
\label{Pran5}
\sup_{n \geq 1} \dE \bigl[ \bigl\| \varepsilon_{n} \bigr\|^4 \bigr] < \infty
\end{equation}
Therefore, we immediately obtain from \eqref{Pran5} that $(M_n)$ satisfies Lindeberg's condition.
Finally, we deduce from the central limit theorem for martingales given by Corollary 2.1.10 in  \cite{Duf97}
that
$$
\frac{1}{\sqrt{n}} M_{n} \cvl \mathcal{N}\Bigl( 0 , \left(\nabla^{2} G \left( \theta \right) \right)^{-1} \Bigr)
$$
which, via \eqref{Pran1} and \eqref{Pran2}, completes the proof of Theorem \ref{T-AN}.
\demend

\vspace{-2ex}
\bibliographystyle{abbrv}
\def\cprime{$'$}

\end{document}